%
\documentclass[11pt,a4paper,twoside]{article}
\usepackage{amsthm,amsfonts,amsmath,amscd,amssymb}
\usepackage{latexsym}
\usepackage{euscript}
\usepackage{enumitem}
\usepackage{graphicx}
\usepackage{tikz}
\usepackage{caption}
\usepackage{subcaption}
\usepackage{cite}
\usepackage[utf8]{inputenc}
\usepackage[english]{babel}
\usepackage{xcolor}




\usepackage{jmpag} 


\tolerance=9000
\textwidth=135mm 
\textheight=216.5mm 
\oddsidemargin=0mm
\evensidemargin=0mm
\topmargin-10mm

\pagestyle{jmpag}



\begin{document}



\title{Completely
Integrable Gradient System on the  bivariate beta statistical
manifold.}



\author{Prosper Rosaire Mama  Assandje}
\address{University of Yaounde1, Department of Mathematics, Yaounde, 337, Cameroon}
\email{mamarosaire@facsciences-uy1.cm}
\orcid{0000-0001-6106-6677}

\author{Joseph Dongho}
\address{University of Maroua, Department of Mathematics and Computer Science, Maroua, 814, Cameroon}
\email{josephdongho@yahoo.fr}
\orcid{0000-0002-2653-3636}

\author{Thomas Bouetou Bouetou}
\address{University of Yaounde1, Department of Mathematics , 337, Cameroon}
\email{tbouetou@gmail.com}
\orcid{0000-0002-2453-0378}



\BeginPaper 



\newcommand{\ep}{\varepsilon}
\newcommand{\eps}[1]{{#1}_{\varepsilon}}



\begin{abstract}
This paper investigates the geometry of a completely integrable
gradient system defined on the three-parameter bivariate beta
statistical manifold of the first kind. We prove that the associated
vector field is Hamiltonian and admits a Lax pair representation
implying complete integrability. We show that the potential function
derived from exponential family structure defines a Riemannian
metric equivalent to the Fisher information metric. By applying
Stirling's approximation to the gamma functions involved in the
potential, we obtain an explicit expression that facilitates the
study of the pseudo-riemannian geometry of the manifold.
Furthermore, we demonstrate that the gradient flow is linearizable
in dual affine coordinates, and we identify the Hamiltonian function
whose gradient defines the flow. These results highlight the deep
interplay between information geometry, dynamical systems, and
asymptotic analysis.

\key{Hamiltonian function, gradient system, Lax pair.}

\msc{37J35 ,37J06,37J39.}
\end{abstract}


\section{Introduction}

In \cite{morris-book4}, a dynamical system is describes as the time
evolution of any object in the states space $S$ of a physical
system. In general, $S$ is a submanifold of $\mathbb{R}^{n}$; and
there is a particular type of system named gradient system with
particulary interesting Liapunov functions. Gradient system  have
been extensively studied over the past decades due to their  range
of applications in physics \cite{morris-book4}, biology, control
theory, and information geometry
\cite{mama-proceeding,nak-journal,nakamura3,nakamura2,nakamura4}. In
a geometric framework, these systems are typically defined on
Riemannian or pseudo-Riemannian manifolds, where the dynamics arise
from  the gradient of a potential function with respect  to the
underlying metric. The concept of gradient flows naturally extends
to statistical manifolds, which are equipped with both a Riemannian
metric usually the Fisher information metric and dual affine
connections, as introduced  by Amari \cite{Shu-book1} in the context
of information geometry. In such manifolds, the geometry not only
describes statistical structures but also supports dynamical flows,
which can often be interpreted as gradient systems. These systems
are particularly relevant in statistical, physics, thermodynamics
\cite{souriau}, and machine learning. In this paper, we focus on the
three-parameter  bivariate beta statistical manifold of the first
kind, and develops an important concept of the complete
integrability of gradient systems on a manifold that admits a
potential function in odd dimension, as discussed in
\cite{mama-proceeding}. This arises from a parametric family
distributions belonging to the exponential family. Our goal is to
construct a gradient system on this manifold and explore its
integrability, geometry, and Hamiltonian structure. Y. Nakamura
\cite{nak-journal,nakamura3,nakamura4} in $1993$, developed the
framework of Liouville complete integrable systems on statistical
manifolds, and constructed gradient systems on manifolds of various
probability distributions. Y. Nakamura also established a connection
with Lax pairs in \cite{nakamura1,nakamura2}. Further results were
introduced by A. Fujiwara \cite{Dy-journal,Dy1}, within the context
of information geometry. Recently, Jean-Pierre \cite{jpf}, has
revisited this model and extended it to peakon systems. Barbaresco
has established a link between the Souriau algorithm and the
computation of the characteristic polynomial. In
\cite{mama0-journal}, it is shown that the lognormal statistical
manifold admits a potential gradient system that is Hamiltonian and
completely integrable, with a Lax pair representation by a symmetric
matrix. In references \cite{mama1,mama24}, the complete
integrability of the gradient system on the beta manifold of the
first kind in even dimensions is proven. In \cite{mama-proceeding},
the complete integrability of gradient systems defined on a
statistical manifold in odd dimensions was studied. Which brings us
to my next question. How can one construct, characterize, and
analyze the dynamics of a completely integrable gradient system
defined on the statistical manifold induced by the three parameter
bivariate beta distribution, while accounting for both the exact
geometric structure and its asymptotic approximations? We
investigate the geometry and dynamics of a gradient system defined
on the statistical manifold induced by the three  parameter
bivariate beta distribution of the first kind. We establish the
existence of a potential function associated with the exponential
family structure, and we show that the resulting gradient vector
field is compatible with a Fisher information metric. Using
\cite{mama-proceeding}, and using this structure, we demonstrate
that the associated dynamical system is Hamiltonian and admits a Lax
pair formulation, which enures complete integrability. To facilitate
explicit computation and geometric interpretation, we apply
Stirling's\cite{daniel-journal} approximation to the gamma functions
involved in the potential. This yields a closed form expression for
the approximate Fisher information metric and enables us to
linearize the gradient flow in dual affine coordinates.
 The main result show that the asymptotic system is not only
geometrically well structured, but also analytically tractable: the
gradient flow is completely integrable, the Hamiltonian can be
explicitly integrable, the manifold's metric properties can be fully
described in the asymptotic regime. This work reveals a deep
connection between the geometry of statistical manifolds and the
theory of integrable system. It highlights how asymptotic technics
can provide powerful tools uncovering hidden dynamical system. After
the introduction, in section $2$, recall the preliminaries motion on
theory of statistical manifold, in section $3$ we determine the
Riemannian structure on Bivariate Beta family of the first kind with
three parameters distribution,  in section $4$, we determine the
asymptotic pseudo-Riemannian structure related to Stirling's formula
\cite{daniel-journal}. In section $5$, we have the integrability of
approximated Gradient Systems. At the end, we show that this
gradient system admits a Lax pair representation.

\section{Preliminaries}\label{sec2}
Let $S = \left\{p_{ \theta}(x),\left.
                        \begin{array}{ll}
                         \theta\in \Theta & \hbox{} \\
                          x\in \mathcal{X} & \hbox{}
                        \end{array}
                      \right.
\right\}$ be the set of probabilities $p_{ \theta}$, parameterized
by $ \Theta$, open a subset of $\mathbb{R}^{n}$; on the sample space
$\mathcal{X}\subseteq\mathbb{R}$.  Let
$\mathcal{F(\mathcal{X},\mathbb{R})}$ be the space of real-valued
smooth functions on $\mathcal{X}$. According to Ovidiu \cite{
ovidiu-book3}, the log-likelihood function is a mapping defined by
\begin{eqnarray}
l:S&\longrightarrow& \mathcal{F(\mathcal{X},\mathbb{R})}\nonumber\\
p_{ \theta} &\longmapsto&  l\left(p_{ \theta}\right)(x) = \log
p_{\theta}(x)\nonumber
\end{eqnarray}
 Sometimes, for convenient reasons, this will be denoted by
$l(x,\theta)=l\left(p_{ \theta}\right)(x)$.\\
In \cite{nak-journal} and \cite{Shu-book1}, the Fisher information
defined by
\begin{equation}(g_{ij})_{1\leq i;j\leq n}=\left(-\mathbb{E}[\partial_{\theta_{i}}\partial_{\theta_{j}}l(x,\theta)]\right)_{1\leq i;j\leq n}\label{e0}
\end{equation}
 Denote $G=(g_{ij})_{1\leq i;j\leq
n}$ the Fisher information matrix, the gradient system is given by
\begin{equation}\dot{\overrightarrow{\theta}}=-G^{-1}\partial_{\theta}\Phi(\theta).\label{e2}\end{equation}
In physics mathematics \cite{lesfari2}, the system  (\ref{e2}) is
said to be Hamiltonian if it exists a bivector field $\barwedge$
and, $\mathcal{H},$ such that
\begin{equation}\label{sh}\dot{ \theta}(t)=\barwedge\frac{\partial \mathcal{H}}{\partial \theta}\end{equation}
  where $\mathcal{H}$ is smooth function called  hamiltonian function
and $\barwedge$ is a bivector fields such that
$[\barwedge;\barwedge]=0$ where $[\;;\;]$ denotes the Schouten
Nijenhus bracket. In Poisson geometry the equation
 $[\barwedge,\barwedge]=0$ is equivalent to the fact that  , operator
 $\{\;;\;\}$ defined by:
$\{\mathcal{H};F\} =\langle\frac{\partial \mathcal{H}}{\partial
x},\barwedge\frac{\partial F}{\partial
x}\rangle=\sum\limits_{i,j}\barwedge_{ij} \frac{\partial
\mathcal{H}}{\partial
 x_{i}}\frac{\partial F}{\partial x_{j}}$
is Poisson bracket on $C^{\infty}(S)$. The system (\ref{sh}) is said
to be completely integrable in the sense of Liouville-Arnol'd
\cite{Li} if it has $n$-prime integrals
$\mathcal{H}=\mathcal{H}_{1},\mathcal{H}_{2},\dots ,\mathcal{H}_{n}$
 ie.,  $\{\mathcal{H}_{i},\mathcal{H}_{j}
    \}=0;$
    $1\leq i,j\leq n$ and they are
     functionally independent,i.e; \begin{equation*}d\mathcal{H}_{1}\wedge d\mathcal{H}_{2}\wedge \dots \wedge
     d\mathcal{H}_{n}\neq0.\end{equation*}
     The system~(\ref{sh}) is said to be completely
integrable in dimensional $n=2m+c,\;m\in\mathbb{N}^{*}$ in the sense
of Lesfari~\cite{lesfari2} if it has $c$-prime integrals
$\mathcal{H}_{m+1},\mathcal{H}_{m+2},\dots ,\mathcal{H}_{m+c}$
called casimir function such that:
\begin{equation*}
\barwedge\frac{\partial\mathcal{H}_{m+i}}{\partial
  \theta}=0,\;1\leq i \leq c\end{equation*}
   In \cite{mama-proceeding},
the complete integrability of gradient system (\ref{e2}) is proven
by application of the Theorem 1.

\section{Riemannian structure on Bivariate Beta family of the first kind with three parameters distribution}\label{sec3}
In this section, we present the particular properties of beta
bivariate Beta family of the first kind with three parameters and
associated information metric and associated gradient system.
\subsection{Particular properties of Beta Bivariate Beta family of the first kind with three parameters.}\label{subsec1}
Let \begin{equation*}S =
\left\{p_{\theta}(x)=\frac{1}{B(a,b,c)}x_{1}^{a-1}x_{2}^{b-1}(1-x_{1}-x_{2})^{c-1}
,\left.
                        \begin{array}{ll}
                         \theta= (a,\; b,\;c)\in \mathbb{R}^{*}_{+}\times \mathbb{R}^{*}_{+}\times \mathbb{R}^{*}_{+}& \hbox{} \\
                        x=(x_{1},\;x_{2}) \in \mathbb{R}^{*}_{+}\times \mathbb{R}^{*}_{+} & \hbox{}\\
                        x_{1}+x_{2}<1  & \hbox{}\\
                        B(a,b,c)= \frac{\Gamma(a).\Gamma(b).\Gamma(c)}{\Gamma(a+b+c)}& \hbox{}
                        \end{array}
                      \right.
\right\}\end{equation*} the distribution set of the Beta Bivariate
Beta family of the first kind with three parameters.
\begin{proposition}
The  Bivariate Beta family of the first kind with three parameters
$(a,\; b,\;c)\in \mathbb{R}^{*}_{+}\times \mathbb{R}^{*}_{+}\times
\mathbb{R}^{*}_{+}$ distribution
\begin{equation}\label{e3}
p_{\theta}(x)=\frac{1}{B(a,b,c)}x_{1}^{a-1}x_{2}^{b-1}(1-x_{1}-x_{2})^{c-1}
\end{equation} is  an exponential
family for all $B(a,b,c) >0$.
\end{proposition}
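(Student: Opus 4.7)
The plan is to verify directly that the density~(\ref{e3}) can be cast into the canonical form of an exponential family, namely
\begin{equation*}
p_{\theta}(x) \;=\; \exp\!\Bigl(\,\textstyle\sum_{i} \eta_{i}(\theta)\,T_{i}(x) \;-\; \psi(\theta) \;+\; k(x)\Bigr),
\end{equation*}
by identifying the natural parameters $\eta_i(\theta)$, the sufficient statistics $T_i(x)$, the cumulant (log-partition) function $\psi(\theta)$, and the carrier term $k(x)$.

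First I would take the logarithm of (\ref{e3}) to obtain
\begin{equation*}
\log p_{\theta}(x) \;=\; (a-1)\log x_{1} \;+\; (b-1)\log x_{2} \;+\; (c-1)\log(1-x_{1}-x_{2}) \;-\; \log B(a,b,c).
\end{equation*}
Then I would separate the linear contributions of the parameters from the purely $x$-dependent correction, rewriting the right-hand side as
\begin{equation*}
a\log x_{1} + b\log x_{2} + c\log(1-x_{1}-x_{2}) \;-\; \log\!\bigl(x_{1}x_{2}(1-x_{1}-x_{2})\bigr) \;-\; \log B(a,b,c).
\end{equation*}
This immediately suggests the identifications $\eta_{1}(\theta)=a$, $\eta_{2}(\theta)=b$, $\eta_{3}(\theta)=c$, sufficient statistics $T_{1}(x)=\log x_{1}$, $T_{2}(x)=\log x_{2}$, $T_{3}(x)=\log(1-x_{1}-x_{2})$, carrier term $k(x)=-\log\!\bigl(x_{1}x_{2}(1-x_{1}-x_{2})\bigr)$, and potential $\psi(\theta)=\log B(a,b,c)$.

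Next I would check that these are well defined on the stated support: since $0<x_{1},x_{2}$ and $x_{1}+x_{2}<1$, the logarithms $T_{i}(x)$ and $k(x)$ are finite, and by hypothesis $B(a,b,c)>0$ so $\psi(\theta)=\log B(a,b,c)$ is a smooth function of $\theta\in(\mathbb{R}_{+}^{\ast})^{3}$. Finally I would note that the map $\theta\mapsto(\eta_{1},\eta_{2},\eta_{3})=(a,b,c)$ is the identity and hence a diffeomorphism onto its image, so $(a,b,c)$ are genuine natural parameters and the family has full rank three; this confirms that $S$ is an exponential family in the sense recalled in Section~\ref{sec2}.

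There is no substantive obstacle here: the argument is a one-step algebraic rearrangement of $\log p_{\theta}$. The only point deserving care is the bookkeeping of the factor $-\log(x_{1}x_{2}(1-x_{1}-x_{2}))$, which must be placed in the carrier $k(x)$ rather than absorbed into $\psi(\theta)$, since this is what makes the parameter dependence strictly linear in the sufficient statistics — the defining feature of the exponential family structure used in the sequel to derive the potential function and the Fisher metric.
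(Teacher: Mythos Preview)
Your proposal is correct and follows essentially the same route as the paper: both take the logarithm of $p_{\theta}$, separate the $(a-1),(b-1),(c-1)$ terms into a linear part in $(a,b,c)$ plus a pure $x$-term, and then read off the sufficient statistics $\log x_{1},\log x_{2},\log(1-x_{1}-x_{2})$, the carrier $C(x)=-\log\bigl(x_{1}x_{2}(1-x_{1}-x_{2})\bigr)$, and the potential $\Phi(\theta)=\log B(a,b,c)$. Your additional remarks on well-definedness of the logarithms on the support and on the natural-parameter map being the identity (hence full rank) are a welcome tightening but do not change the underlying argument.
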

\begin{proof}
We have, \begin{equation*}p_{\theta}(x)=\exp\left[(a-1)\log x_{1}
+(b-1)\log x_{2}+(c-1)\log(1-x_{1}-x_{2}) +\log
\frac{1}{B(a,b,c)}\right]
\end{equation*}
Whose, we have \begin{eqnarray*}p_{\theta}(x)&=& \exp\left[-\log
x_{1}-\log x_{2}-\log(1-x_{1}-x_{2})+a\log x_{1} +b\log
x_{2}\right.\\
&&\left.+c\log(1-x_{1}-x_{2}) +\log \frac{1}{B(a,b,c)}\right]
\end{eqnarray*}
so, \begin{equation*}p_{\theta}(x)= \exp\left[C(x)+ a f_{1}(x_{1})
+b\ f_{2}(x_{2})+cf_{3}(x_{1},x_{2})- \Phi(\theta)\right]
\end{equation*}
 with: \[ f_{1}(x_{1})= \log x_{1}, \;  f_{2}(x_{2})= \log x_{2};\;
 f_{3}(x_{1},x_{2})=\log(1-x_{1}-x_{2}),\]
 \[\Phi(\theta)= \log \frac{\Gamma(a).\Gamma(b).\Gamma(c)}{\Gamma(a+b+c)}\]
 More explicitly the potential function  given by
\begin{equation}\label{e4}
\Phi(\theta)=\log\Gamma(a)+\log\Gamma(b)+\log\Gamma(c)-\log\Gamma(a+b+c)
\end{equation}
 and \[ C(x)=-\log x_{1}-\log
x_{2}-\log(1-x_{1}-x_{2}).
 \]
According to Ovidiu Calin in his book~\cite{ovidiu-book3}  this
family is an exponential family.
\end{proof}
\subsection{Associated information metric and associated gradient system.}\label{subsec2}
\begin{proposition}\label{e5}
The Fisher information matrix  for the Bivariate Beta family of the
first kind with three parameters $(a,\; b,\;c)\in
\mathbb{R}^{*}_{+}\times \mathbb{R}^{*}_{+}\times
\mathbb{R}^{*}_{+}$ is given by
\footnotesize{\begin{equation}\label{e6} G= \left(
  \begin{array}{ccc}
    -\phi_{aa}(a) + \phi_{aa}(a+b+c)& \phi_{ab}(a+b+c)&\phi_{ac}(a+b+c)\\
   \phi_{ab}(a+b+c)& -\phi_{bb}(b) + \phi_{bb}(a+b+c) & \phi_{bc}(a+b+c)\\
    \phi_{ac}(a+b+c)  & \phi_{bc}(a+b+c)&  -\phi_{cc}(c) + \phi_{cc}(a+b+c) \\
  \end{array}
\right)
\end{equation}}
with
\begin{eqnarray*}
\left\{
  \begin{array}{ll}
    \phi_{aa}(a)= -\phi^{2}_{a}(a)+\phi^{2}_{a}(a+b+c), & \hbox{} \\
    \phi_{aa}(a+b+c)=-\frac{\frac{\partial^{2}\Gamma(a)}{\partial
a^{2}}}{\Gamma(a)}
-\frac{\frac{\partial^{2}\Gamma(a+b+c)}{\partial a^{2}}}{\Gamma(a+b+c)}, & \hbox{} \\
     \phi_{bb}(b)= -\phi^{2}_{b}(b)+\phi^{2}_{b}(a+b+c), & \hbox{} \\
    \phi_{bb}(a+b+c)=-\frac{\frac{\partial^{2}\Gamma(b)}{\partial b^{2}}}{\Gamma(b)}
  -\frac{\frac{\partial^{2}\Gamma(a+b+c)}{\partial b^{2}}}{\Gamma(a+b+c)}, & \hbox{} \\
    \phi_{cc}(a)= -\phi^{2}_{c}(c)+\phi^{2}_{c}(a+b+c), & \hbox{}\\
\phi_{cc}(a+b+c)=-\frac{\frac{\partial^{2}\Gamma(c)}{\partial
c^{2}}}{\Gamma(c)}
  -\frac{\frac{\partial^{2}\Gamma(a+b+c)}{\partial c^{2}}}{\Gamma(a+b+c)},& \hbox{} \\
\phi_{ab}(a+b+c)=\frac{\partial_{a}\partial_{b}\Gamma(a+b+c)}{\Gamma(a+b+c)}
-\frac{\partial_{a}\Gamma(a+b+c).\partial_{b}\Gamma(a+b+c)}{\Gamma^{2}(a+b+c)},& \hbox{} \\
\phi_{ac}(a+b+c)=\frac{\partial_{a}\partial_{c}\Gamma(a+b+c)}{\Gamma(a+b+c)}
-\frac{\partial_{a}\Gamma(a+b+c).\partial_{c}\Gamma(a+b+c)}{\Gamma^{2}(a+b+c)},&\hbox{} \\
\phi_{bc}(a+b+c)=\frac{\partial_{b}\partial_{c}\Gamma(a+b+c)}{\Gamma(a+b+c)}
-\frac{\partial_{b}\Gamma(a+b+c).\partial_{c}\Gamma(a+b+c)}{\Gamma^{2}(a+b+c)}.&\hbox{}
  \end{array}
\right.
\end{eqnarray*}
and

$\phi_{a}(a)=\frac{\partial_{a}\Gamma(a)}{\Gamma(a)},\;$
$\phi_{b}(b)=\frac{\partial_{b}\Gamma(b)}{\Gamma(b)},\;$
$\phi_{c}(c)=\frac{\partial_{c}\Gamma(c)}{\Gamma(c)},\;$
$\phi_{a}(a+b+c)=\frac{\partial_{a}\Gamma(a+b+c)}{\Gamma(a+b+c)},\;$
$\phi_{b}(a+b+c)=\frac{\partial_{b}\Gamma(a+b+c)}{\Gamma(a+b+c)},\;$
$\phi_{c}(a+b+c)=\frac{\partial_{c}\Gamma(a+b+c)}{\Gamma(a+b+c)},\;$
where  $\phi_{aa}(a)=\partial_{a}\phi_{a}(a),\;\newline
\partial_{a}=\frac{\partial}{\partial_{a}}$
\end{proposition}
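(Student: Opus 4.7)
The plan is to exploit the exponential family structure already established in the previous proposition. For any regular exponential family with log-likelihood $l(x,\theta)=C(x)+\sum_{i}\theta_{i}T_{i}(x)-\Phi(\theta)$, the second partial derivatives $\partial_{\theta_{i}}\partial_{\theta_{j}}l$ are independent of $x$, so taking the expectation in (\ref{e0}) is trivial and one obtains $g_{ij}(\theta)=\partial_{\theta_{i}}\partial_{\theta_{j}}\Phi(\theta)$. This reduces the entire computation of the Fisher information matrix to evaluating the Hessian of the potential function (\ref{e4}), with no integration over $\mathcal{X}$ needed.

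First I would compute the gradient of $\Phi(\theta)=\log\Gamma(a)+\log\Gamma(b)+\log\Gamma(c)-\log\Gamma(a+b+c)$. Using the notation $\phi_{a}(u)=\Gamma'(u)/\Gamma(u)$ introduced in the statement, together with the chain rule (since $\partial_{a}(a+b+c)=1$), this immediately yields $\partial_{a}\Phi=\phi_{a}(a)-\phi_{a}(a+b+c)$, and analogous expressions for $b$ and $c$ by symmetry.

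Next, I would differentiate a second time. For a diagonal entry such as $\partial_{a}^{2}\Phi$, the quotient rule applied to $\Gamma'(a)/\Gamma(a)$ gives $\partial_{a}^{2}\log\Gamma(a)=\partial_{aa}\Gamma(a)/\Gamma(a)-\phi_{a}^{2}(a)$, and the same manipulation applied to $\log\Gamma(a+b+c)$ produces a parallel term involving $\phi_{a}^{2}(a+b+c)$; grouping the two contributions with the appropriate signs gives the $(1,1)$ entry of $G$, while the $(2,2)$ and $(3,3)$ entries follow by the obvious symmetry in $a,b,c$. For an off-diagonal entry such as $\partial_{a}\partial_{b}\Phi$, the key observation is that $\log\Gamma(a)$, $\log\Gamma(b)$, $\log\Gamma(c)$ contribute nothing (their mixed partials vanish), so only $-\log\Gamma(a+b+c)$ matters; applying the quotient rule to $\partial_{a}\log\Gamma(a+b+c)$ differentiated again with respect to $b$ reproduces exactly the expression stated for $\phi_{ab}(a+b+c)$, and the remaining off-diagonal entries $\phi_{ac}(a+b+c)$, $\phi_{bc}(a+b+c)$ are obtained by the same argument.

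No step presents a serious conceptual obstacle; the main difficulty is purely bookkeeping. Because the definition (\ref{e0}) carries a minus sign, and because $\Phi$ itself contains the subtracted term $-\log\Gamma(a+b+c)$, these two minus signs interact in every entry, and one must track them carefully. The final task is then to consistently re-express the $\Gamma''/\Gamma$ quantities appearing in the Hessian through the abbreviations $\phi_{aa}(\cdot)$, $\phi_{bb}(\cdot)$, $\phi_{cc}(\cdot)$, $\phi_{ab}(\cdot)$, $\phi_{ac}(\cdot)$, $\phi_{bc}(\cdot)$ introduced in the statement, and to assemble them into the $3\times 3$ matrix (\ref{e6}).
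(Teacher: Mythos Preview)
Your proposal is correct and follows essentially the same route as the paper: both compute the first and then second partial derivatives of the log-likelihood (equivalently, of $\Phi$) via the quotient rule, observe that the $x$-dependence drops out so that the expectation in (\ref{e0}) is trivial, and then repackage the resulting $\Gamma''/\Gamma$ and $(\Gamma'/\Gamma)^{2}$ terms into the abbreviations of the statement. The only minor difference is that you invoke the general exponential-family identity $g_{ij}=\partial_{i}\partial_{j}\Phi$ explicitly at the outset, whereas the paper differentiates $l(x,\theta)$ directly; this is a presentational distinction rather than a substantive one.
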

\begin{proof}
We have   \begin{equation}\label{e7}l(x,\theta)=\log
p_{\theta}(x)=C(x)+ a f_{1}(x_{1}) +b\
f_{2}(x_{2})+cf_{3}(x_{1},x_{2})- \Phi(\theta).\end{equation} using
the expression (\ref{e7}) we have the following relation;
\begin{eqnarray*}
\left\{
  \begin{array}{ll}
    \frac{\partial l(x,\theta)}{\partial_{a}}= f_{1}(x_{1})+\frac{\partial_{a}\Gamma(a)}{\Gamma(a)}-\frac{\partial_{a}\Gamma(a+b+c)}{\Gamma(a+b+c)}, & \hbox{} \\
    \frac{\partial l(x,\theta)}{\partial_{b}}= f_{2}(x_{2})+\frac{\partial_{b}\Gamma(b)}{\Gamma(b)}-\frac{\partial_{b}\Gamma(a+b+c)}{\Gamma(a+b+c)},& \hbox{} \\
    \frac{\partial l(x,\theta)}{\partial_{c}}=
f_{3}(x_{1},x_{2})+\frac{\partial_{c}\Gamma(c)}{\Gamma(c)}-\frac{\partial_{c}\Gamma(a+b+c)}{\Gamma(a+b+c)}.
& \hbox{}
  \end{array}
\right.
\end{eqnarray*}
So we obtain the following expression
\begin{eqnarray*}
\left\{
  \begin{array}{ll}
    \frac{\partial^{2}l(x,\theta)}{\partial_{a^{2}}}=-\phi^{2}_{a}(a)+\phi^{2}_{a}(a+b+c)
+\frac{\frac{\partial^{2}\Gamma(a)}{\partial a^{2}}}{\Gamma(a)}-\frac{\frac{\partial^{2}\Gamma(a+b+c)}{\partial a^{2}}}{\Gamma(a+b+c)}, & \hbox{} \\
    \frac{\partial^{2}l(x,\theta)}{\partial_{b^{2}}}=-\phi^{2}_{b}(b)+\phi^{2}_{b}(a+b+c)
+\frac{\frac{\partial^{2}\Gamma(b)}{\partial b^{2}}}{\Gamma(b)}-\frac{\frac{\partial^{2}\Gamma(a+b+c)}{\partial b^{2}}}{\Gamma(a+b+c)}, & \hbox{} \\
    \frac{\partial^{2}l(x,\theta)}{\partial_{c^{2}}}=-\phi^{2}_{c}(c)+\phi^{2}_{c}(a+b+c)
+\frac{\frac{\partial^{2}\Gamma(c)}{\partial c^{2}}}{\Gamma(c)}-\frac{\frac{\partial^{2}\Gamma(a+b+c)}{\partial c^{2}}}{\Gamma(a+b+c)}, & \hbox{} \\
    \frac{\partial^{2}l(x,\theta)}{\partial_{a}\partial_{b}}=\frac{\partial_{a}\partial_{b}\Gamma(a+b+c)}{\Gamma(a+b+c)}-\frac{\partial_{a}\Gamma(a+b+c).\partial_{b}\Gamma(a+b+c)}{\Gamma^{2}(a+b+c)}, & \hbox{} \\
    \frac{\partial^{2}l(x,\theta)}{\partial_{a}\partial_{c}}=\frac{\partial_{a}\partial_{c}\Gamma(a+b+c)}{\Gamma(a+b+c)}-\frac{\partial_{a}\Gamma(a+b+c).\partial_{c}\Gamma(a+b+c)}{\Gamma^{2}(a+b+c)}, & \hbox{} \\
\frac{\partial^{2}l(x,\theta)}{\partial_{b}\partial_{c}}=\frac{\partial_{b}\partial_{c}\Gamma(a+b+c)}{\Gamma(a+b+c)}-\frac{\partial_{b}\Gamma(a+b+c).\partial_{c}\Gamma(a+b+c)}{\Gamma^{2}(a+b+c)}.
& \hbox{}
  \end{array}
\right.
\end{eqnarray*}
by setting
\begin{eqnarray*}
\left\{
  \begin{array}{ll}
    \phi_{aa}(a)= -\phi^{2}_{a}(a)+\phi^{2}_{a}(a+b+c)\; \texttt{and} \;\phi_{aa}(a+b+c)=-\frac{\frac{\partial^{2}\Gamma(a)}{\partial a^{2}}}{\Gamma(a)}-\frac{\frac{\partial^{2}\Gamma(a+b+c)}{\partial a^{2}}}{\Gamma(a+b+c)}, & \hbox{} \\
     \phi_{bb}(b)= -\phi^{2}_{b}(b)+\phi^{2}_{b}(a+b+c)\; \texttt{and} \;\phi_{bb}(a+b+c)=-\frac{\frac{\partial^{2}\Gamma(b)}{\partial b^{2}}}{\Gamma(b)}-\frac{\frac{\partial^{2}\Gamma(a+b+c)}{\partial b^{2}}}{\Gamma(a+b+c)}, & \hbox{} \\
    \phi_{cc}(a)= -\phi^{2}_{c}(c)+\phi^{2}_{c}(a+b+c)\;\texttt{ and} \;\phi_{cc}(a+b+c)=-\frac{\frac{\partial^{2}\Gamma(c)}{\partial c^{2}}}{\Gamma(c)}-\frac{\frac{\partial^{2}\Gamma(a+b+c)}{\partial c^{2}}}{\Gamma(a+b+c)}, & \hbox{} \\
    \phi_{ab}(a+b+c)=\frac{\partial_{a}\partial_{b}\Gamma(a+b+c)}{\Gamma(a+b+c)}-\frac{\partial_{a}\Gamma(a+b+c).\partial_{b}\Gamma(a+b+c)}{\Gamma^{2}(a+b+c)}, & \hbox{} \\
    \phi_{ac}(a+b+c)=\frac{\partial_{a}\partial_{c}\Gamma(a+b+c)}{\Gamma(a+b+c)}-\frac{\partial_{a}\Gamma(a+b+c).\partial_{c}\Gamma(a+b+c)}{\Gamma^{2}(a+b+c)}, & \hbox{} \\
    \phi_{bc}(a+b+c)=\frac{\partial_{b}\partial_{c}\Gamma(a+b+c)}{\Gamma(a+b+c)}-\frac{\partial_{b}\Gamma(a+b+c).\partial_{c}\Gamma(a+b+c)}{\Gamma^{2}(a+b+c)}. & \hbox{}
  \end{array}
\right.
\end{eqnarray*}
 we obtain the following expression
\begin{eqnarray*}
\left\{
  \begin{array}{ll}
    \frac{\partial^{2}l(x,\theta)}{\partial_{a^{2}}}=\phi_{aa}(a) - \phi_{aa}(a+b+c),& \hbox{} \\
    \frac{\partial^{2}l(x,\theta)}{\partial_{b^{2}}}=\phi_{bb}(b) - \phi_{bb}(a+b+c), & \hbox{} \\
    \frac{\partial^{2}l(x,\theta)}{\partial_{c^{2}}}=\phi_{cc}(a) - \phi_{cc}(a+b+c), & \hbox{} \\
    \frac{\partial^{2}l(x,\theta)}{\partial_{ab}}=- \phi_{ab}(a+b+c), & \hbox{} \\
    \frac{\partial^{2}l(x,\theta)}{\partial_{ac}}=- \phi_{ac}(a+b+c), & \hbox{} \\
    \frac{\partial^{2}l(x,\theta)}{\partial_{bc}}=- \phi_{bc}(a+b+c). & \hbox{}
  \end{array}
\right.
\end{eqnarray*}
Using (\ref{e0}), we have
\begin{eqnarray*}
g_{aa}(\theta)&=&-\phi_{aa}(a) + \phi_{aa}(a+b+c)\\
g_{bb}(\theta)&=&-\phi_{bb}(b) + \phi_{bb}(a+b+c)\\
g_{cc}(\theta)&=&-\phi_{cc}(c) + \phi_{cc}(a+b+c)\\
g_{ab}(\theta)&=&\phi_{ab}(a+b+c)\\
g_{ac}(\theta)&=& \phi_{ac}(a+b+c)\\
g_{bc}(\theta)&=& \phi_{bc}(a+b+c).
\end{eqnarray*}
\end{proof}
 Since our coordinate system $\theta= (a,
 b,c)$ admits a dual pair  $\eta=
(\eta_{1}, \eta_{2}, \eta_{3})$ such that
  \begin{eqnarray}\label{e8}\left\{
  \begin{array}{ll}
    \eta_{1}= \phi_{a}(a) - \phi_{a}(a+b+c) & \hbox{} \\
    \eta_{2}=  \phi_{b}(b) - \phi_{b}(a+b+c)& \hbox{} \\
     \eta_{3}=  \phi_{c}(c) - \phi_{c}(a+b+c)& \hbox{}
  \end{array}
\right.\end{eqnarray} According to Amari~\cite{Shu-book1} there
exists a dual potential  function $\Psi$ which verifies the Legendre
equation. We have
\begin{eqnarray*}
\Psi(\eta)&=&a\phi_{a}(a) - a\phi_{a}(a+b+c)+b\phi_{a}(a) -
b\phi_{b}(a+b+c)+c\phi_{c}(c) - c\phi_{c}(a+b+c)\\
&&-\log \Gamma(a)-\log \Gamma(b)-\log \Gamma(c)+\log \Gamma(a+b+c)
\end{eqnarray*}

Throughout the rest of the work,  $S$ equipped with the metric $G$
and the affine connection will be like  a statistical manifold. We
denoted by $(S,g,\nabla)$  the statistical manifold.

\begin{theorem}
Let $(S,g,\nabla)$  be the statistical manifold associated  to the
Bivariate Beta family of the first kind, parameterized by  three
parameters $(a,\; b,\;c)\in \mathbb{R}^{*}_{+}\times
\mathbb{R}^{*}_{+}\times \mathbb{R}^{*}_{+}$. Equipped with the
Fisher information metric $g$ and an affine connection $\nabla$,
this manifold induces a gradient flow governed by the following
system of differential equations:
\begin{eqnarray}\label{e9} &\left\{
                \begin{array}{ll}
                  \dot{a} =  -\zeta_{1}(a+b+c)\left(\phi_{a}(a)
                  - \phi_{a}(a+b+c) \right)
                  -\zeta_{4}(a+b+c)\left(\phi_{b}(b) \right)& \hbox{} \\
                    - \phi_{b}(a+b+c)-\zeta_{5}(a+b+c) \left( \phi_{c}(c) + \phi_{c}(a+b+c) \right)& \hbox{} \\
                  \dot{b} =- \zeta_{4}(a+b+c)\left(\phi_{a}(a)
                  - \phi_{a}(a+b+c) \right)
                  - \zeta_{2}(a+b+c)\left(\phi_{b}(b)  \right)& \hbox{} \\
                   - \phi_{b}(a+b+c)-\zeta_{6}(a+b+c)\left( \phi_{c}(c) + \phi_{c}(a+b+c) \right)& \hbox{} \\
                  \dot{c} = - \zeta_{5}(a+b+c) \left(\phi_{a}(a)
                  - \phi_{a}(a+b+c) \right)
                  -\zeta_{6}(a+b+c)\left(\phi_{b}(b)  \right)& \hbox{} \\
                   - \phi_{b}(a+b+c)-\zeta_{3}(a+b+c)  \left( \phi_{c}(c) + \phi_{c}(a+b+c) \right)& \hbox{}
                \end{array}\right.&
\end{eqnarray}
where $ \phi_{x}(t)= \frac{d}{d x}\log \Gamma(t)$ denotes the
digamma function, and the functions
$\zeta_{i}:\mathbb{R}_{+}\rightarrow \mathbb{R}$, for $i=1,\dots,5$,
are smooth and depend only the sum $a+b+c$. This system define a
sub-dynamical system of $4$-dimensional system and is a completely
integrable Hamiltonian system with the associated Hamiltonian
function
\begin{equation}\label{e13}
\mathcal{H}(a,b,c)= \frac{\phi_{b}(b) - \phi_{b}(a+b+c)}{\phi_{a}(a)
- \phi_{a}(a+b+c)}+\frac{\phi_{c}(c) - \phi_{c}(a+b+c) }{\phi_{b}(b)
- \phi_{b}(a+b+c) },
\end{equation} such that \begin{equation}\label{e15}\left(
                    \begin{array}{c}
                      \dot{P}_{1} \\
                      \dot{Q}_{1}\\
                      \dot{P'}_{1}\\
                      \dot{Q'}_{1}\\
                    \end{array}
                  \right)= \left(
                              \begin{array}{cccc}
                                0 & 1 & 0 & 0 \\
                                -1 & 0 & 0 & 0 \\
                                0 & 0 & 0 & 1 \\
                                0 & 0 & -1 & 0 \\
                              \end{array}
                            \right)
                  \left(
                                    \begin{array}{c}
                                     \frac{\partial \mathcal{H}}{\partial P_{1}} \\
                                      \frac{\partial \mathcal{H}}{\partial Q_{1}}\\
\frac{\partial \mathcal{H}}{\partial P'_{1}}\\
 \frac{\partial
\mathcal{H}}{\partial Q'_{1}}
                                    \end{array}
                                  \right)\end{equation}
where $\barwedge=\left(
                              \begin{array}{cccc}
                                0 & 1 & 0 & 0 \\
                                -1 & 0 & 0 & 0 \\
                                0 & 0 & 0 & 1 \\
                                0 & 0 & -1 & 0 \\
                              \end{array}
                            \right)$ is a Poisson tensor,
where
\begin{eqnarray*} \zeta_{1}(a+b+c)&=&\frac{\phi_{bb}(b)\phi_{cc}(c) -
\phi_{cc}(c)\phi_{cc}(a+b+c)
-\phi_{cc}(c)\phi_{bb}(a+b+c)}{m(a,b,c)}\\
&&\frac{+\phi_{bb}(a+b+c)\phi_{cc}(a+b+c)-\phi^{2}_{bc}(a+b+c)}{m(a,b,c)}\\
 \zeta_{2}(a+b+c)&=&\frac{\phi_{aa}(a)\phi_{cc}(c) - \phi_{aa}(a)\phi_{cc}(a+b+c)-\phi_{cc}(c)\phi_{aa}(a+b+c)}{m(a,b,c)}\\
 &&\frac{+\phi_{aa}(a+b+c)\phi_{cc}(a+b+c)-\phi^{2}_{ac}(a+b+c)}{m(a,b,c)}\\
\zeta_{3}(a+b+c) &=&\frac{\phi_{aa}(a)\phi_{bb}(b) - \phi_{aa}(a)\phi_{bb}(a+b+c)-\phi_{bb}(b)\phi_{aa}(a+b+c)}{m(a,b,c)} \\
&&\frac{+\phi_{aa}(a+b+c)\phi_{bb}(a+b+c)-\phi^{2}_{ab}(a+b+c)}{m(a,b,c)} \\
\zeta_{4}(a+b+c)&=&\frac{\phi_{bc}(a+b+c)\phi_{ac}(a+b+c) +
\phi_{ab}(a+b+c)\phi_{cc}(c)}{m(a,b,c)}\\
&& \frac{-\phi_{ab}(a+b+c)\phi_{cc}(a+b+c)}{m(a,b,c)}\\
                  \zeta_{5}(a+b+c)&=&\frac{\phi_{ab}(a+b+c)\phi_{bc}(a+b+c) + \phi_{ac}(a+b+c)\phi_{bb}(b)}{m(a,b,c)}\\
                  && \frac{-\phi_{ac}(a+b+c)\phi_{bb}(a+b+c)}{m(a,b,c)} \\
\zeta_{6}(a+b+c)&=&\frac{\phi_{bc}(a+b+c)\phi_{aa}(a) -
\phi_{bc}(a+b+c)\phi_{aa}(a+b+c)}{m(a,b,c)} \\
&&\frac{+\phi_{ab}(a+b+c)\phi_{ac}(a+b+c)}{m(a,b,c)}
                  \end{eqnarray*}
and \begin{eqnarray*}
 \phi_{aa}(a)&=& -\phi^{2}_{a}(a)+\phi^{2}_{a}(a+b+c)\\
\phi_{aa}(a+b+c)&=&-\frac{\frac{\partial^{2}\Gamma(a)}{\partial a^{2}}}{\Gamma(a)}-\frac{\frac{\partial^{2}\Gamma(a+b+c)}{\partial a^{2}}}{\Gamma(a+b+c)} \\
 \phi_{bb}(b)&=& -\phi^{2}_{b}(b)+\phi^{2}_{b}(a+b+c)\\
  \phi_{bb}(a+b+c)&=&-\frac{\frac{\partial^{2}\Gamma(b)}{\partial b^{2}}}{\Gamma(b)}-\frac{\frac{\partial^{2}\Gamma(a+b+c)}{\partial b^{2}}}{\Gamma(a+b+c)}\\
 \phi_{cc}(a)&=& -\phi^{2}_{c}(c)+\phi^{2}_{c}(a+b+c)\\
  \phi_{cc}(a+b+c)&=&-\frac{\frac{\partial^{2}\Gamma(c)}{\partial c^{2}}}{\Gamma(c)}-\frac{\frac{\partial^{2}\Gamma(a+b+c)}{\partial c^{2}}}{\Gamma(a+b+c)}\\
\phi_{ab}(a+b+c)&=&\frac{\partial_{a}\partial_{b}\Gamma(a+b+c)}{\Gamma(a+b+c)}-\frac{\partial_{a}\Gamma(a+b+c).\partial_{b}\Gamma(a+b+c)}{\Gamma^{2}(a+b+c)}\\
\phi_{ac}(a+b+c)&=&\frac{\partial_{a}\partial_{c}\Gamma(a+b+c)}{\Gamma(a+b+c)}-\frac{\partial_{a}\Gamma(a+b+c).\partial_{c}\Gamma(a+b+c)}{\Gamma^{2}(a+b+c)}\\
\phi_{bc}(a+b+c)&=&\frac{\partial_{b}\partial_{c}\Gamma(a+b+c)}{\Gamma(a+b+c)}-\frac{\partial_{b}\Gamma(a+b+c).\partial_{c}\Gamma(a+b+c)}{\Gamma^{2}(a+b+c)}
\end{eqnarray*}
 where  \begin{eqnarray*}
m(a,b,c)&=&
-\phi_{aa}(a)\phi_{bb}(b)\phi_{cc}(c)+\phi_{aa}(a)\phi_{bb}(b)\phi_{cc}(a+b+c)\\
&&-\phi_{aa}(a)\phi_{bb}(a+b+c)\phi_{cc}(a+b+c)\phi_{aa}(a)\phi^{2}_{bc}(a+b+c)\\
&&+\phi_{aa}(a+b+c)\phi_{bb}(b)\phi_{cc}(c)-\phi_{aa}(a+b+c)\phi_{bb}(b)\phi_{cc}(a+b+c)\\
&&-\phi_{aa}(a+b+c)\phi_{bb}(a+b+c)\phi_{cc}(a+b+c)-\phi_{aa}(a+b+c)\phi^{2}_{bc}(a+b+c)\\
&&+2\phi_{ab}(a+b+c)\phi_{bc}(a+b+c)\phi_{ac}(a+b+c)\phi^{2}_{ab}(a+b+c)\phi_{cc}(c)\\
&&-\phi^{2}_{ab}(a+b+c)\phi_{cc}(a+b+c)+\phi^{2}_{ac}(a+b+c)\phi_{bb}(b)\\
&&-\phi^{2}_{ac}(a+b+c)\phi_{bb}(a+b+c)
\end{eqnarray*} is  the determinant of the matrix $G$.
\end{theorem}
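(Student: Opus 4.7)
The plan is to derive the system (\ref{e9}) directly from the general formula (\ref{e2}), then exhibit the Hamiltonian/symplectic structure by passing to the dual affine coordinates $\eta$ of (\ref{e8}), and finally invoke the integrability result from \cite{mama-proceeding}.

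First, I would compute $\partial_\theta \Phi$ from the explicit potential (\ref{e4}). Differentiating $\log \Gamma$ yields $\partial_a\Phi = \phi_a(a)-\phi_a(a+b+c)$, and similarly for $b$ and $c$. This already matches the expressions appearing as inhomogeneous factors on the right-hand side of (\ref{e9}). Next I would invert the Fisher matrix $G$ from Proposition \ref{e5} via the cofactor formula $G^{-1}=\frac{1}{\det G}\,\mathrm{adj}(G)$. Expanding the $3\times 3$ determinant gives $\det G = m(a,b,c)$ as written, and each cofactor, divided by $m(a,b,c)$, produces exactly one of the coefficient functions $\zeta_i(a+b+c)$ listed in the statement. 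Substituting into $\dot\theta = -G^{-1}\partial_\theta \Phi$ and collecting terms component-wise yields (\ref{e9}). The only delicate point here is bookkeeping: tracking the signs in the cofactor expansion and verifying that the off-diagonal $\zeta_4,\zeta_5,\zeta_6$ come out symmetric. Nondegeneracy, $m(a,b,c)\neq 0$ on the parameter domain, is required so that $G^{-1}$ exists; I would note this as a generic condition on $\mathbb{R}^{*}_{+}\times\mathbb{R}^{*}_{+}\times\mathbb{R}^{*}_{+}$.

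For the Hamiltonian structure, the key is the duality of (\ref{e8}). Setting $\eta_1 = \phi_a(a)-\phi_a(a+b+c)$, etc., gives dual affine coordinates in which the gradient flow linearizes, since the Legendre dual potential $\Psi$ satisfies $\partial_{\theta_i}\Phi = \eta_i$ and $\partial_{\eta_i}\Psi = \theta_i$. The proposed Hamiltonian
\begin{equation*}
\mathcal{H}(a,b,c)= \frac{\phi_{b}(b) - \phi_{b}(a+b+c)}{\phi_{a}(a)- \phi_{a}(a+b+c)}+\frac{\phi_{c}(c) - \phi_{c}(a+b+c) }{\phi_{b}(b)- \phi_{b}(a+b+c) }
\end{equation*}
is nothing but $\mathcal{H}=\eta_2/\eta_1 + \eta_3/\eta_2$ in dual coordinates. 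I would then introduce canonical pairs $(P_1,Q_1,P_1',Q_1')$ by setting, for example, $Q_1 = \eta_1$, $P_1 = \eta_2/\eta_1$, $Q_1' = \eta_2$, $P_1' = \eta_3/\eta_2$, and check by direct computation that (\ref{e9}), transported to these coordinates, takes the canonical form (\ref{e15}) with the displayed Poisson tensor $\barwedge$. Closure of the Schouten--Nijenhuis bracket $[\barwedge,\barwedge]=0$ is immediate because $\barwedge$ has constant coefficients.

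Finally, for complete integrability, I would apply Theorem 1 of \cite{mama-proceeding}, which is exactly the result tailored to gradient systems on statistical manifolds admitting a potential of exponential-family type in odd dimension $n = 2m+c$. The present situation corresponds to $n=3 = 2\cdot 1 + 1$: the system embeds into a $4$-dimensional canonical Hamiltonian system with $\mathcal{H}=\mathcal{H}_1$ as above, an independent second integral $\mathcal{H}_2$ arising from the second canonical pair, and a Casimir function $\mathcal{H}_3$ annihilated by $\barwedge$ (playing the role of the $c$-parameter sum $a+b+c$ on which all $\zeta_i$ depend). Functional independence is checked by verifying $d\mathcal{H}_1\wedge d\mathcal{H}_2\wedge d\mathcal{H}_3 \neq 0$ on an open dense subset, and involution $\{\mathcal{H}_i,\mathcal{H}_j\}=0$ follows from the block-diagonal form of $\barwedge$.

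The main obstacle I anticipate is purely computational: verifying that after transformation to $(P_1,Q_1,P_1',Q_1')$, the transported vector field (\ref{e9}) agrees identity-by-identity with the canonical equations generated by $\mathcal{H}$. The ratio structure of $\mathcal{H}$ together with the nontrivial dependence of the $\zeta_i$ on $m(a,b,c)$ makes the algebra heavy, and one must use repeatedly the chain-rule identities $\partial_a \eta_j = g_{aj}$ etc., to reconcile the Fisher-metric form of the flow with the constant symplectic form on the dual side. All remaining assertions (Casimir identification, involution, functional independence) then follow mechanically from the canonical form.
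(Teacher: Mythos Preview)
Your overall strategy matches the paper's: compute $G^{-1}$ by cofactors, substitute into $\dot\theta=-G^{-1}\partial_\theta\Phi$ to obtain (\ref{e9}), pass to the dual coordinates $\eta$ of (\ref{e8}), and invoke the main theorem of \cite{mama-proceeding}. One step you leave implicit but which the paper makes explicit is the linearization itself: from $\dot\eta=G\dot\theta=G(-G^{-1}\partial_\theta\Phi)=-\partial_\theta\Phi=-\eta$ one gets the decoupled system $\dot\eta_i=-\eta_i$, and everything downstream rests on this identity.

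The genuine gap is your choice of canonical variables. With your $Q_1=\eta_1$ and $P_1=\eta_2/\eta_1$, the flow $\dot\eta_i=-\eta_i$ gives $\dot Q_1=-Q_1$ but $\dot P_1=0$ (the ratio of two quantities with identical exponential decay is constant). In these variables $\mathcal{H}=P_1+P_1'$, so the canonical equation would require $\dot Q_1=-\partial\mathcal{H}/\partial P_1=-1$, not $-Q_1$: the verification you flag as ``purely computational'' actually fails. The paper instead sets
\[
P_1=-\frac{1}{\eta_1},\qquad Q_1=\eta_2,\qquad P_1'=-\frac{1}{\eta_2},\qquad Q_1'=\eta_3,
\]
so that $\mathcal{H}=P_1Q_1+P_1'Q_1'$. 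Under $\dot\eta_i=-\eta_i$ one then gets $\dot P_1=\dot\eta_1/\eta_1^{2}=-1/\eta_1=P_1=\partial\mathcal{H}/\partial Q_1$ and $\dot Q_1=-Q_1=-\partial\mathcal{H}/\partial P_1$, and similarly for the primed pair, which is exactly (\ref{e15}). Note also that these four variables satisfy the constraint $P_1'Q_1=-1$, so they are not independent; this is precisely how the three-dimensional gradient flow sits inside a four-dimensional canonical system as a sub-dynamical system. The paper does not introduce a separate Casimir as you suggest.
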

\begin{proof}
Using the proposition $1$ we have
 \begin{eqnarray*}
m(a,b,c)&=&det G=
-\phi_{aa}(a)\phi_{bb}(b)\phi_{cc}(c)+\phi_{aa}(a)\phi_{bb}(b)\phi_{cc}(a+b+c)\\
&&-\phi_{aa}(a)\phi_{bb}(a+b+c)\phi_{cc}(a+b+c)\phi_{aa}(a)\phi^{2}_{bc}(a+b+c)\\
&&+\phi_{aa}(a+b+c)\phi_{bb}(b)\phi_{cc}(c)-\phi_{aa}(a+b+c)\phi_{bb}(b)\phi_{cc}(a+b+c)\\
&&-\phi_{aa}(a+b+c)\phi_{bb}(a+b+c)\phi_{cc}(a+b+c)-\phi_{aa}(a+b+c)\phi^{2}_{bc}(a+b+c)\\
&&+2\phi_{ab}(a+b+c)\phi_{bc}(a+b+c)\phi_{ac}(a+b+c)\phi^{2}_{ab}(a+b+c)\phi_{cc}(c)\\
&&-\phi^{2}_{ab}(a+b+c)\phi_{cc}(a+b+c)+\phi^{2}_{ac}(a+b+c)\phi_{bb}(b)\\
&&-\phi^{2}_{ac}(a+b+c)\phi_{bb}(a+b+c)
\end{eqnarray*}
 we obtain:
\begin{equation}\label{e10} G^{-1}= \left(
  \begin{array}{lll}
    \zeta_{1}(a+b+c)&\zeta_{4}(a+b+c)&\zeta_{5}(a+b+c) \\
   \zeta_{4}(a+b+c)&\zeta_{2}(a+b+c) & \zeta_{6}(a+b+c)\\
   \zeta_{5}(a+b+c) & \zeta_{6}(a+b+c) & \zeta_{3}(a+b+c) \\
  \end{array}
\right)
\end{equation}
Using the relation~(\ref{e6}) and~(\ref{e10}) in~(\ref{e2}) we have
the  result. Thereafter we show that this equation can be reduced to
the following linear system \begin{equation} \label{e11} \frac{d
\eta}{dt}
 = - \eta.
 \end{equation}Indeed, since  \begin{equation}\label{e12}
\dot{\overrightarrow{\eta}}=G\dot{\overrightarrow{\theta}}
\end{equation}
so,  using~(\ref{e8})  and~(\ref{e9}) in~(\ref{e12}) we obtain
\begin{equation*} \left(
                             \begin{array}{c}
                               \dot{\eta}_{1} \\
                               \dot{\eta}_{2} \\
                               \dot{\eta}_{3} \\
                             \end{array}
                           \right)
 =\left(
     \begin{array}{c}
     -\phi_{a}(a) + \phi_{a}(a+b+c)\\
      - \phi_{b}(b) + \phi_{b}(a+b+c)\\
      - \phi_{c}(c) + \phi_{c}(a+b+c)\\
     \end{array}
   \right)
 \end{equation*}
which clearly shows that system~(\ref{e19}) is linearized. Using the
main Theorem in \cite{mama-proceeding}, and using (\ref{e8}), by
setting
\[P_{1}=-\frac{1}{\phi_{a}(a) - \phi_{a}(a+b+c)},\; Q_{1}=\phi_{b}(b)
- \phi_{b}(a+b+c),\] \[P'_{1}=-\frac{1}{\phi_{b}(b) -
\phi_{b}(a+b+c)},\; Q'_{1}= \phi_{c}(c) - \phi_{c}(a+b+c).\], We
have $\mathcal{H}=Q_{1}P_{1}+Q'_{1}P'_{1}$, so
\begin{equation*}
\mathcal{H}(a,b,c)= \frac{\phi_{b}(b) - \phi_{b}(a+b+c)}{\phi_{a}(a)
- \phi_{a}(a+b+c)}+\frac{\phi_{c}(c) - \phi_{c}(a+b+c) }{\phi_{b}(b)
- \phi_{b}(a+b+c) }.
\end{equation*}
We have
\[ \dot{P}_{1}=P_{1},\; \dot{Q}_{1}=-Q_{1},\; \dot{P'}_{1}=P'_{1},\;
\dot{Q'}_{1}=-Q'_{1}. \] Using the Lemma $1$ we have \[
\frac{\partial \mathcal{H}}{\partial Q_{1}}=P_{1},\;
 \frac{\partial \mathcal{H}}{\partial P_{1}}=Q_{1},\;
 \frac{\partial \mathcal{H}}{\partial Q'_{1}}=P'_{1},\;
 \frac{\partial \mathcal{H}}{\partial P'_{1}}=Q'_{1}.
\]   We have the following
system\begin{eqnarray}\label{e16}\left\{
                                     \begin{array}{ll}
                                       \dot{P}_{1} =\frac{\partial \mathcal{H}}{\partial Q_{1}}& \hbox{} \\
                                      \dot{Q}_{1}= -\frac{\partial \mathcal{H}}{\partial P_{1}}& \hbox{} \\
                                        \dot{P'}_{1}= \frac{\partial \mathcal{H}}{\partial Q'_{1}}& \hbox{} \\
                                       \dot{Q'}_{1}=-\frac{\partial \mathcal{H}}{\partial P'_{1}} & \hbox{.}
                                     \end{array}
                                   \right.
\end{eqnarray}

the system~(\ref{e16}) take the form:\begin{eqnarray*}\left(
                    \begin{array}{c}
                      \dot{P}_{1} \\
                      \dot{Q}_{1}\\
                      \dot{P'}_{1}\\
                      \dot{Q'}_{1}\\
                    \end{array}
                  \right)&=& \left(
                              \begin{array}{cccc}
                                0 & 1 & 0 & 0 \\
                                -1 & 0 & 0 & 0 \\
                                0 & 0 & 0 & 1 \\
                                0 & 0 & -1 & 0 \\
                              \end{array}
                            \right)
                  \left(
                                    \begin{array}{c}
                                     \frac{\partial \mathcal{H}}{\partial P_{1}} \\
                                      \frac{\partial \mathcal{H}}{\partial Q_{1}}\\
\frac{\partial \mathcal{H}}{\partial P'_{1}}\\
 \frac{\partial
\mathcal{H}}{\partial Q'_{1}}
                                    \end{array}
                                  \right).\end{eqnarray*}
We see that~(\ref{e9}) is  Hamiltonian system.  Since the
system~(\ref{e9}) is completely integrable on $S$.
\end{proof}
The gradient system defined by the Bivariate Beta family of the
first kind with three parameters odd-dimensional manifold is a
completely integrable Hamiltonian system.
\section{Asymptotic Pseudo-Riemannian structure.}\label{sec5}
 In this section, we examine the geometric properties of the statistical manifold associated with
the asymptotic approximation of the three-parameter first-kind beta
distribution, as described by Stirling's
formula.~\cite{daniel-journal}.
\subsection{Associated information metric and   asymptotic approximation gradient system.}\label{subsec3} Using James
Stirling's formula which was recalled by Mortici,
Cristinel~\cite{daniel-journal}  we have the following proposition
\begin{proposition}Let $(a,b,c) \in ]1;+\infty[\times ]1;+\infty[\times ]1;+\infty[$.
The asymptotic approximation potential function $\Phi$ is defined on
the space of three dimensional deprived of the cube of edges $1cm$
and of which one of the vertices are the origin of the frame and the
three positive axes define the supports of these edges, its
expression is given by
\begin{eqnarray*}
 \Phi(\theta)&=& (a+b+c-\frac{1}{2})\log (a+b+c-1)+(\frac{1}{2}-a)\log (a-1)+(\frac{1}{2}-b)\log (b-1)
 \\
 &&+(\frac{1}{2}-c)\log
 (c-1)-\log(
 2\pi) -2.
\end{eqnarray*}
\end{proposition}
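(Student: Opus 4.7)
The plan is to derive the stated asymptotic expression for $\Phi$ by direct substitution of Stirling's formula into the four log-gamma terms appearing in the exact potential $\Phi(\theta)=\log\Gamma(a)+\log\Gamma(b)+\log\Gamma(c)-\log\Gamma(a+b+c)$ from~(\ref{e4}), and then collecting terms. The geometric phrase removing the unit cube at the origin with edges along the positive axes simply encodes the analytic restriction $(a,b,c)\in(1,+\infty)^{3}$, which is what is needed to guarantee that the shifted arguments $a-1$, $b-1$, $c-1$ and $a+b+c-1$ are strictly positive.

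The most convenient form of Stirling's formula for this purpose is the shifted version,
\[
\log\Gamma(x)\;\sim\;(x-\tfrac{1}{2})\log(x-1)-(x-1)+\tfrac{1}{2}\log(2\pi),\qquad x>1,
\]
obtained from the classical expansion $\log\Gamma(n+1)\sim(n+\tfrac{1}{2})\log n-n+\tfrac{1}{2}\log(2\pi)$ recalled in~\cite{daniel-journal} by the substitution $n=x-1$. Under the domain hypothesis, this expansion is valid at each of the four evaluation points $a$, $b$, $c$, and $a+b+c$, and the error is uniformly controlled on compact subsets, which is what the subsequent pseudo-Riemannian analysis requires.

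Next, I would apply this expansion to each of $\log\Gamma(a)$, $\log\Gamma(b)$, $\log\Gamma(c)$ and $\log\Gamma(a+b+c)$, and insert the resulting four expressions into~(\ref{e4}). The logarithmic terms of the form $(x-\tfrac{1}{2})\log(x-1)$ keep their shape and produce the four contributions $(a-\tfrac{1}{2})\log(a-1)$, $(b-\tfrac{1}{2})\log(b-1)$, $(c-\tfrac{1}{2})\log(c-1)$ and $-(a+b+c-\tfrac{1}{2})\log(a+b+c-1)$; these are, up to an overall sign convention, exactly the non-constant terms in the claim. The affine contributions telescope: $-(a-1)-(b-1)-(c-1)+(a+b+c-1)=2$, and the four $\tfrac{1}{2}\log(2\pi)$ constants (three with a plus, one with a minus) combine to $\log(2\pi)$. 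Using $(x-\tfrac{1}{2})=-(\tfrac{1}{2}-x)$ and reordering then matches the closed form displayed in the statement.

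The computation itself is purely mechanical, so the main delicacy is the sign bookkeeping. One must carefully align conventions between the exact form of Stirling's expansion cited from~\cite{daniel-journal} and the sign with which each log-gamma enters~(\ref{e4}): the three numerator $\Gamma$'s contribute with $+$ and the denominator $\Gamma(a+b+c)$ with $-$, and these signs couple to both the $-(x-1)$ part and the $\tfrac{1}{2}\log(2\pi)$ part of the expansion. Once the convention is fixed the cancellations producing the numerical constants $\pm 2$ and $\pm\log(2\pi)$ are automatic, and no further analytical input is needed beyond the uniform validity of Stirling's approximation on the prescribed domain.
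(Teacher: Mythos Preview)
Your approach is exactly the paper's: the paper's proof consists solely of the sentence ``Using~(\ref{e4}) and Stirling we have~(\ref{e17})'' together with the domain remark, and you carry out that substitution in detail with the shifted expansion $\log\Gamma(x)\sim(x-\tfrac12)\log(x-1)-(x-1)+\tfrac12\log(2\pi)$.

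One remark on the sign bookkeeping you flag but do not resolve: following your own computation literally yields
\[
(a-\tfrac12)\log(a-1)+(b-\tfrac12)\log(b-1)+(c-\tfrac12)\log(c-1)-(a+b+c-\tfrac12)\log(a+b+c-1)+2+\log(2\pi),
\]
which is the \emph{negative} of the displayed expression in the proposition; the identity $(x-\tfrac12)=-(\tfrac12-x)$ alone does not reconcile this, since all four log-terms and both constants flip together. The paper's one-line proof does not address this either, so the discrepancy lies in the statement rather than in your method.
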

\begin{proof}\textbf{}\newline
Using (\ref{e4}) and Stirling~\cite{daniel-journal}  we have
\begin{eqnarray}\label{e17}
\Phi(\theta)&=& (a+b+c-\frac{1}{2})\log
(a+b+c-1)+(\frac{1}{2}-a)\log (a-1)+(\frac{1}{2}-b)\log
(b-1)\nonumber\\
&&+(\frac{1}{2}-c)\log
 (c-1)-\log(
 2\pi) -2.
\end{eqnarray}
Relation~(\ref{e17}) is defined only for the values of $a$,$b$ ,and
$c$ such that:\[(a,b,c) \in ]1;+\infty[\times ]1;+\infty[\times
]1;+\infty[.\] By setting \[\Upsilon=\left\{(a,b,c)\in
\mathbb{R}^{3} \left|\right. 0 \leq a \leq 1,0 \leq b \leq 1,0 \leq
c \leq 1\right\}.\] $\Upsilon$ defines the cube of edges $1cm$. Then
$(a,b,c) \in \mathbb{R}^{3}-\{\Upsilon\}$.
\end{proof}
So, we have the following proposition
\begin{proposition}
Let $\Upsilon=\left\{(a,b,c)\in \mathbb{R}^{3} \left|\right. 0 \leq
a \leq 1,0 \leq b \leq 1,0 \leq c \leq 1\right\}$  the cube of edges
$1cm$. The asymptotic pseudo-Riemannian structure related to
Stirling's formula  defined for all $(a,b,c) \in
\mathbb{R}^{3}-\{\Upsilon\}$ is given by
\footnotesize{\begin{equation}\label{e18} G= \left(
  \begin{array}{ccc}
    \frac{1}{ a+b+c-1} -\frac{a-\frac{3}{2}}{(a-1)^{2}} &  \frac{1}{ a+b+c-1} & \frac{1}{ a+b+c-1}\\
    \frac{1}{ a+b+c-1} & \frac{1}{ a+b+c-1} -\frac{b-\frac{3}{2}}{(b-1)^{2}} & \frac{1}{ a+b+c-1}\\
    \frac{1}{ a+b+c-1}  & \frac{1}{ a+b+c-1}&  \frac{1}{ a+b+c-1} -\frac{c-\frac{3}{2}}{(c-1)^{2}}  \\
  \end{array}
\right)
\end{equation}}
\end{proposition}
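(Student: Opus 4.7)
My plan is to exploit the fact that, for an exponential family parameterized in its natural coordinates, the Fisher information matrix coincides with the Hessian of the potential $\Phi$. This is essentially what Proposition 2 exhibits in the exact case, where each entry $g_{ij}$ is built from second derivatives of the log-Gamma functions comprising $\Phi$. Substituting the asymptotic $\Phi$ from Proposition 4 therefore reduces the claim to a direct computation of $\partial_{i}\partial_{j}\Phi$ for
\[
\Phi(\theta) = (s-\tfrac{1}{2})\log(s-1) + (\tfrac{1}{2}-a)\log(a-1) + (\tfrac{1}{2}-b)\log(b-1) + (\tfrac{1}{2}-c)\log(c-1) - \log(2\pi) - 2,
\]
with the shorthand $s := a+b+c$, followed by reading off the leading-order expression entry by entry.

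First, I compute the gradient. Writing $(s-1/2)/(s-1) = 1 + 1/(2(s-1))$ and $(1/2-a)/(a-1) = -1 - 1/(2(a-1))$, the $\pm 1$'s cancel and I obtain
\[
\partial_{a}\Phi = \log\frac{s-1}{a-1} + \frac{1}{2(s-1)} - \frac{1}{2(a-1)},
\]
with symmetric expressions for $\partial_{b}\Phi$ and $\partial_{c}\Phi$. Differentiating $\partial_{a}\Phi$ once more with respect to $a$ (and using $\partial_{a}s = 1$) produces
\[
\partial^{2}_{aa}\Phi = \frac{1}{s-1} - \frac{1}{a-1} - \frac{1}{2(s-1)^{2}} + \frac{1}{2(a-1)^{2}}.
\]
The elementary identity
\[
-\frac{1}{a-1} + \frac{1}{2(a-1)^{2}} = \frac{-2(a-1)+1}{2(a-1)^{2}} = -\frac{a-3/2}{(a-1)^{2}}
\]
recasts the diagonal as $\frac{1}{s-1} - \frac{a-3/2}{(a-1)^{2}}$ plus a subleading correction $-\frac{1}{2(s-1)^{2}}$. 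Cross-differentiation $\partial_{a}\partial_{b}\Phi$ receives contributions only from the $s$-dependent pieces, yielding $\frac{1}{s-1}$ modulo the same $O((s-1)^{-2})$ remainder. Permuting $(a,b,c)$ fills in all remaining entries and reproduces the matrix (\ref{e18}).

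The main subtlety is not the algebra but making the asymptotic truncation explicit and consistent: since Stirling's formula was already invoked once to obtain the simplified $\Phi$ of Proposition 4, one must retain only Hessian contributions of the leading orders $(s-1)^{-1}$ and $(a-1)^{-2}$ (and their $b$, $c$ analogues), discarding the $(s-1)^{-2}$ terms generated by double differentiation of the subleading $1/(2(s-1))$ piece of $\partial_{a}\Phi$. A clean way to frame this is to write $\Phi = \Phi_{0} + R(\theta)$ with $R = O(s^{-1})$ the Stirling remainder, and to observe that $\partial_{i}\partial_{j}R = O(s^{-2})$ is negligible relative to $(s-1)^{-1}$. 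Finally, the domain condition $(a,b,c)\in\mathbb{R}^{3}\setminus\Upsilon$ is exactly what guarantees simultaneous finiteness and regularity of $\log(a-1)$, $\log(b-1)$, $\log(c-1)$ together with the denominators $(a-1)^{2}$, $(b-1)^{2}$, $(c-1)^{2}$ and $s-1$, so the matrix (\ref{e18}) is well-defined and smooth on this open set.
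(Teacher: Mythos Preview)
Your approach is the same as the paper's: both compute the Hessian of the Stirling-approximated potential $\Phi$ from equation~(\ref{e17}). The paper's proof simply lists the nine entries $g_{ij}$ without intermediate steps; you carry out the differentiation explicitly and arrive at the same expressions.

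You are in fact more careful than the paper on one point. As you correctly compute, the exact Hessian of the $\Phi$ in (\ref{e17}) carries an additional $-\tfrac{1}{2(s-1)^{2}}$ in every entry, so that (\ref{e18}) is not literally $\operatorname{Hess}\Phi$ but its leading-order part in $s$. The paper silently omits this term; you flag it and supply an asymptotic rationale for discarding it, which makes the intended approximation explicit.

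One minor overstatement in your closing sentence: the complement of $\Upsilon$ does \emph{not} force $a,b,c>1$ simultaneously (take $(2,\tfrac12,\tfrac12)\notin\Upsilon$, where $\log(b-1)$ is undefined). The natural domain for (\ref{e18}) is $(1,\infty)^{3}$, as in the preceding proposition; the paper's own domain statement here is already loose, so this is a shared imprecision rather than a flaw in your argument.
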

\begin{proof}
The asymptotic approximation potential function (\ref{e17}) we
obtain the following coefficients,
\begin{eqnarray*}
g_{11}( \theta)
&=&\frac{1}{ a+b+c-1} -\frac{a-\frac{3}{2}}{(a-1)^{2}}. \\
g_{22}( \theta)
&=& \frac{1}{ a+b+c-1} -\frac{b-\frac{3}{2}}{(b-1)^{2}}. \\
g_{12}( \theta)
&=&  \frac{1}{ a+b+c-1}.\\
g_{21}( \theta)
&=&   \frac{1}{ a+b+c-1}\\
g_{31}( \theta)
&=& \frac{1}{ a+b+c-1}.\\
g_{32}(\theta)
&=& \frac{1}{ a+b+c-1}.\\
g_{13}(\theta)
&=& \frac{1}{ a+b+c-1}.\\
g_{23}(\theta)&= & \frac{1}{ a+b+c-1}.\\
g_{33}(\theta) &=& \frac{1}{
a+b+c-1}-\frac{c-\frac{3}{2}}{(c-1)^{2}}.
\end{eqnarray*}
 \end{proof}
For all $(a,b,c) \in \mathbb{R}^{3}-\{\Upsilon\}$\newline with
$\Upsilon=\left\{(a,b,c)\in \mathbb{R}^{3} \left|\right. 0 \leq a
\leq 1,0 \leq b \leq 1,0 \leq c \leq 1\right\}$. We have a dual
coordinate $\eta_{i}=\partial_{i} \Phi (\theta)$. So,
\begin{eqnarray*}\left\{
  \begin{array}{ll}
    \eta_{1}=  \log (a+b+c-1)- \log (a-1)+\frac{-\frac{1}{2}}{a-1} & \hbox{} \\
    \eta_{2}=   \log (a+b+c-1)- \log (b-1)+\frac{-\frac{1}{2}}{b-1}& \hbox{} \\
     \eta_{3}=   \log (a+b+c-1)- \log (c-1)+\frac{-\frac{1}{2}}{c-1}& \hbox{}
  \end{array}
\right.\end{eqnarray*} we have the following system
\begin{eqnarray}\label{e19}\left\{
  \begin{array}{ll}
    \frac{\partial}{\partial a}\frac{\partial\varphi(\theta)}{\partial a}=  \frac{1}{ a+b+c-1} -\frac{a-\frac{3}{2}}{(a-1)^{2}} & \hbox{} \\
    \frac{\partial}{\partial a}\frac{\partial\varphi(\theta)}{\partial b}= \frac{1}{ a+b+c-1}& \hbox{} \\
     \frac{\partial}{\partial a}\frac{\partial\varphi(\theta)}{\partial c}= \frac{1}{ a+b+c-1}& \hbox{} \\
    \frac{\partial}{\partial b}\frac{\partial\varphi(\theta)}{\partial b}= \frac{1}{ a+b+c-1} -\frac{b-\frac{3}{2}}{(b-1)^{2}} & \hbox{} \\
     \frac{\partial}{\partial b}\frac{\partial\varphi(\theta)}{\partial c}=  \frac{1}{ a+b+c-1}&\hbox{}\\
     \frac{\partial}{\partial c}\frac{\partial\varphi(\theta)}{\partial c}= \frac{1}{ a+b+c-1} -\frac{c-\frac{3}{2}}{(c-1)^{2}}& \hbox{.}
  \end{array}
\right.\end{eqnarray} we have \begin{eqnarray*}
 \Phi(\theta)&=& (a+b+c-\frac{1}{2})\log (a+b+c-1)+(\frac{1}{2}-a)\log (a-1)+(\frac{1}{2}-b)\log
 (b-1)\\
 &&+(\frac{1}{2}-c)\log
 (c-1)+k.\; k\in \mathbb{R}
\end{eqnarray*}
 We have \begin{equation} \label{e20}\Psi(\eta)= a
\eta_{1}+b \eta_{2}+c \eta_{3}-\Phi(\theta)\end{equation} so, we
have the following proposition
\begin{proposition}
The  dual potential function of the first species beta manifold with
three parameters related to asymptotic approximation is defined for
all $(a,b,c) \in \mathbb{R}^{3}-\{\Upsilon\}$ is given by
\begin{eqnarray*}
\Psi(\eta)&= &  -\frac{ a}{2(a-1)}-\frac{b}{2(b-1)}
 -\frac{c}{2(c-1)}+\frac{1}{2}\log
(a+b+c-1)-\frac{1}{2}\log (a-1)\\
&&-\frac{1}{2}\log (b-1)-\frac{1}{2}\log
 (c-1)-k\end{eqnarray*}
\end{proposition}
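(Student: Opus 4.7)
The plan is to derive the stated formula as a direct Legendre dual of the asymptotic potential $\Phi(\theta)$ computed in Proposition~2, using the expression $\Psi(\eta)=a\eta_{1}+b\eta_{2}+c\eta_{3}-\Phi(\theta)$ given in~(\ref{e20}) together with the explicit dual coordinates $\eta_{i}=\partial_{i}\Phi(\theta)$ that were already listed just before the statement. Since all the ingredients are displayed on the preceding pages, the work is essentially a verification: substitute, collect terms, and simplify.

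First I would form the sum $a\eta_{1}+b\eta_{2}+c\eta_{3}$. Using the three expressions for $\eta_{1},\eta_{2},\eta_{3}$, the $\log(a+b+c-1)$ pieces combine to give $(a+b+c)\log(a+b+c-1)$, the diagonal logarithmic pieces give $-a\log(a-1)-b\log(b-1)-c\log(c-1)$, and the remaining $-\tfrac{1}{2}/(x-1)$ terms contribute exactly $-\dfrac{a}{2(a-1)}-\dfrac{b}{2(b-1)}-\dfrac{c}{2(c-1)}$. Then I would subtract $\Phi(\theta)$ as given in~(\ref{e17}); the three coefficients in front of each logarithm cancel in the expected way, namely
\begin{align*}
(a+b+c)\log(a+b+c-1)-\bigl(a+b+c-\tfrac{1}{2}\bigr)\log(a+b+c-1) &= \tfrac{1}{2}\log(a+b+c-1),\\
-a\log(a-1)-\bigl(\tfrac{1}{2}-a\bigr)\log(a-1) &= -\tfrac{1}{2}\log(a-1),
\end{align*}
and analogously for $b$ and $c$. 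Adding the leftover constant $-k$ (with the sign change from $-\Phi$) yields exactly the formula claimed in the proposition.

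The only delicate point is making sure that the Legendre transform formula~(\ref{e20}) is actually applicable here, i.e.\ that $\Phi$ is strictly convex on the open domain $\mathbb{R}^{3}\setminus\Upsilon$ so that $\theta\mapsto\eta$ is a legitimate change of coordinates. This is guaranteed by Proposition~3, since the Hessian matrix~(\ref{e18}) is precisely the metric $G$ derived from $\Phi$ on that domain, and its positive-definiteness there can be read off from the dominant diagonal behaviour of the terms $-(x-\tfrac{3}{2})/(x-1)^{2}$ for $x>1$. Beyond this justification, the computation reduces to the bookkeeping above, so no serious obstacle arises; I would simply present the algebra in two or three display equations and note that the arbitrary constant $k$ from Proposition~2 appears here with the sign dictated by~(\ref{e20}).
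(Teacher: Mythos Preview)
Your approach is exactly the paper's: apply the Legendre formula~(\ref{e20}) with the already-listed $\eta_{i}$ and $\Phi$, then simplify; the algebra you sketch is correct and is precisely what the paper leaves implicit in its one-line ``By applying~(\ref{e20}) we have the result.'' One small caveat: your side remark that $G$ is positive-definite on all of $\mathbb{R}^{3}\setminus\Upsilon$ is not accurate---the paper itself calls $G$ only a \emph{pseudo}-Riemannian metric and in Theorem~2 exhibits a locus where $\det G=0$---but this justification is not needed for the purely formal substitution that the proposition requires.
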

\begin{proof}
By applying~(\ref{e20}) we have the result.
\end{proof}
We start by computing the determinant, and we have:
\begin{equation*}det (G)=-\frac{1}{
8}\frac{4cab-8ca+15a-8ba+15c-8bc-27+15b}{
(a-1)^{2}(b-1)^{2}(c-1)^{2}(a+b+c-1)}
\end{equation*}
So, we have the following theorem
\begin{theorem}
Let $\Upsilon$ the cube such that\\ $\Upsilon=\left\{(a,b,c)\in
\mathbb{R}^{3} \left|\right. 0 \leq a \leq 1,0 \leq b \leq 1,0 \leq
c \leq 1\right\}$,\\ let $(a,b,c) \in \mathbb{R}^{3}-\{\Upsilon\}$,
let $D=<A(0,\frac{3}{2},\frac{3}{2});\; \vec{e}\left(
                             \begin{array}{c}
                               1 \\
                                0\\
                               0 \\
                             \end{array}
                           \right)>
$ the vector line,\\ let $\mathbb{V}=< \left(
\frac{8bc-15b-15c+27}{4cb-8b-8c+15},b,c
                           \right)>$
\begin{itemize}
    \item If   $(a,b,c)\in
\mathbb{V}\cup \{D\}$,$\;$  then $S$ then is an algebraic manifold.
    \item If  $(a,b,c)\in \mathbb{R}^{3}-\{\Upsilon;\mathbb{V}\cup \{D\}\}$ then $G$  is an
asymptotic pseudo-Riemannian metric related to Stirling's formula.
\end{itemize}
\end{theorem}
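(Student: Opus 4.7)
The plan is to exploit the explicit formula for $\det(G)$ stated just above the theorem and analyze exactly when this determinant vanishes. Writing
\[
\det(G) = -\frac{1}{8}\cdot\frac{N(a,b,c)}{(a-1)^{2}(b-1)^{2}(c-1)^{2}(a+b+c-1)},
\]
with numerator $N(a,b,c) = 4abc - 8ab - 8bc - 8ca + 15a + 15b + 15c - 27$, and noting that the matrix $G$ in (\ref{e18}) is manifestly symmetric, the entire dichotomy reduces to determining the zero locus $\{N=0\}$ inside $\mathbb{R}^{3}\setminus\Upsilon$. Off this locus, $G$ is nondegenerate and symmetric, hence defines a genuine pseudo-Riemannian metric; on the locus itself, the Gram form collapses and we are left with the algebraic hypersurface structure that the authors call an \emph{algebraic manifold}.

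First I would solve $N(a,b,c)=0$ for the variable $a$. Regrouping yields
\[
a\bigl(4bc-8b-8c+15\bigr) = 8bc-15b-15c+27,
\]
so whenever $P(b,c) := 4bc-8b-8c+15 \neq 0$ one obtains
\[
a = \frac{8bc-15b-15c+27}{4bc-8b-8c+15},
\]
which is exactly the defining equation of the surface $\mathbb{V}$. This handles the generic component of the vanishing locus.

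Next I would treat the exceptional locus where $P(b,c)=0$. There, $N(a,b,c)=0$ forces also $Q(b,c) := 8bc-15b-15c+27 = 0$. Eliminating the quadratic term via the combination $Q - 2P = b + c - 3$ gives the linear constraint $b+c=3$; substituting back into $P$ produces $-(2b-3)^{2}=0$, so $b=c=3/2$. On this unique pair the variable $a$ decouples and becomes free, which is precisely the line $D=\{(a,3/2,3/2):a\in\mathbb{R}\}$. This proves that $\{N=0\}=\mathbb{V}\cup D$ and establishes the first bullet.

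Finally, for points in $\mathbb{R}^{3}\setminus(\Upsilon\cup\mathbb{V}\cup D)$, one has $N(a,b,c)\neq 0$ by construction, while the factors $(a-1)^{2}(b-1)^{2}(c-1)^{2}(a+b+c-1)$ remain nonzero generically away from the excluded cube, so $\det(G)\neq 0$ and $G$ defines the claimed asymptotic pseudo-Riemannian metric. The step I expect to be the principal obstacle is precisely the exceptional case analysis: the rational parameterization of $\mathbb{V}$ degenerates along $D$, and one must verify that this $0/0$ indeterminacy really contributes a one-parameter family of solutions rather than isolated points. The compact elimination $Q-2P = b+c-3$ is what makes this transparent and should be the conceptual heart of the argument.
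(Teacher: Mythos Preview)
Your proposal is correct and follows essentially the same approach as the paper: both arguments reduce the dichotomy to solving $\det(G)=0$, identify the generic solution surface $\mathbb{V}$ by solving the numerator linearly in $a$, and recover the line $D$ as the residual component. Your treatment is in fact more complete than the paper's own proof, which simply asserts the two solution branches without justification; the elimination $Q-2P=b+c-3$ that you isolate as the key step is exactly what is needed to explain why the exceptional locus $\{P=0\}$ contributes the full line $D$ rather than nothing, and the paper leaves this implicit.
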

\begin{proof}
For all $(a,b,c) \in \mathbb{R}^{3}-\{\Upsilon\}$
\begin{itemize}
    \item we solve $det (G)=0$ and we have $a = \frac{27-15b-15c+8cb}{4cb-8b-8c+15},\;
b = b,\; c = c$$\;$, where $a = a, \; b = \frac{3}{2}, c
=\frac{3}{2}$. or $a = a, \; b = \frac{3}{2}, c =\frac{3}{2}$ define
the vector line $D$ and $a= \frac{27-15b-15c+8cb}{4cb-8b-8c+15},\; b
= b,\; c = c$ define $\mathbb{V}$. So if $(a,b,c)\in \mathbb{V}\cup
\{D\}$,$\;$ then $S$ then is algebraic manifold.
    \item If $(a,b,c)\in \mathbb{R}^{3}-\{\Upsilon;\mathbb{V}\cup \{D\}\}$ then $det (G)\neq 0$ and $G$ is a symmetric matrix then $G$ is an
     asymptotic pseudo-Riemannian metric.
\end{itemize}
\end{proof}
\begin{theorem}Let $\Upsilon$ the cube of edges $1cm$. Let $(a,b,c)\in
\mathbb{R}^{3}-\{\Upsilon;\mathbb{V}\cup \{D\}\}$.  $G$ is an
asymptotic pseudo-Riemannian metric. Let \begin{equation*}S =
\left\{p_{\theta}(x)=\frac{1}{B(a,b,c)}x_{1}^{a-1}x_{2}^{b-1}(1-x_{1}-x_{2})^{c-1}
,\left.
                        \begin{array}{ll}
                         \theta= (a,\; b,\;c)\in \mathbb{R}^{*}_{+}\times \mathbb{R}^{*}_{+}\times \mathbb{R}^{*}_{+}& \hbox{} \\
                        x=(x_{1},\;x_{2}) \in \mathbb{R}^{*}_{+}\times \mathbb{R}^{*}_{+} & \hbox{}\\
                        x_{1}+x_{2}<1  & \hbox{}\\
                        B(a,b,c)= \frac{\Gamma(a).\Gamma(b).\Gamma(c)}{\Gamma(a+b+c)}& \hbox{}
                        \end{array}
                      \right.
\right\}\end{equation*}
 be the statistical manifold, where
$p_{\theta}$ is the density function of the Bivariate Beta family of
the first kind with three parameters family. The approximated
gradient system according stirling's formula in the asymptotic
regime where $a, b, c > 1$ on $S$ is given by
\begin{eqnarray}\label{e21} \left\{
                \begin{array}{ll}
                  \dot{a} = \frac{2(a-1)^{2}(-6ba-6ca+9a+4cab-c-b+3)}{4cab-8ca+15a-8ba+15c-8bc-27+15b}\left(  \log (a+b+c-1)- \log
                  (a-1)-\frac{\frac{1}{2}}{a-1}\right) \hbox{} \\
                  +\frac{4(2c-3)(b-1)^{2}(a-1)^{2}
}{4cab-8ca+15a-8ba+15c-8bc-27+15b}\left(  \log (a+b+c-1)- \log
(b-1)-\frac{\frac{1}{2}}{b-1}\right)& \hbox{} \\
+\frac{4(2b-3)(c-1)^{2}(a-1)^{2}}{4cab-8ca+15a-8ba+15c-8bc-27+15b}
\left(  \log (a+b+c-1)- \log (c-1)-\frac{\frac{1}{2}}{c-1}\right)& \hbox{} \\
                  \dot{b} =\frac{4(2c-3)(b-1)^{2}(a-1)^{2} }{4cab-8ca+15a-8ba+15c-8bc-27+15b}\left(   \log (a+b+c-1)- \log
                  (a-1)-\frac{\frac{1}{2}}{a-1}\right) \hbox{} \\
                  +\frac{2(b-1)^{2}(-6ba-a+4cab+9b-c-6bc+3)}{4cab-8ca+15a-8ba+15c-8bc-27+15b}
\left(   \log (a+b+c-1)- \log (b-1)-\frac{\frac{1}{2}}{b-1}\right)& \hbox{} \\
+\frac{4(2a-3)(b-1)^{2}(c-1)^{2}}{4cab-8ca+15a-8ba+15c-8bc-27+15b}
\left(  \log (a+b+c-1)- \log (c-1)-\frac{\frac{1}{2}}{c-1}\right)& \hbox{} \\
                  \dot{c} =\frac{4(2b-3)(c-1)^{2}(a-1)^{2}}{4cab-8ca+15a-8ba+15c-8bc-27+15b}\left(  \log (a+b+c-1)- \log
                  (a-1)-\frac{\frac{1}{2}}{a-1}\right) \hbox{} \\
                  +\frac{4(2a-3)(b-1)^{2}(c-1)^{2}}{4cab-8ca+15a-8ba+15c-8bc-27+15b}\left(
\log (a+b+c-1)- \log (b-1)-\frac{\frac{1}{2}}{b-1}\right)&\hbox{}\\
+\frac{2(c-1)^{2}(-6ca-a+4cab+9c+3-b-6bc)}{4cab-8ca+15a-8ba+15c-8bc-27+15b}\left(
\log (a+b+c-1)- \log (c-1)-\frac{\frac{1}{2}}{c-1}\right)& \hbox{.}
                \end{array}\right.
\end{eqnarray}
\end{theorem}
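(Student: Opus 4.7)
The plan is to apply the gradient flow formula $\dot{\theta} = -G^{-1}\,\partial_\theta\Phi(\theta)$ from~(\ref{e2}) to the asymptotic potential $\Phi$ given in~(\ref{e17}) and the asymptotic Fisher metric $G$ from~(\ref{e18}). All the necessary raw pieces are already in hand: $\det G$ is computed just before the theorem, and the entries of $G$ have a very rigid structure — every off-diagonal entry equals $\tfrac{1}{a+b+c-1}$, while each diagonal entry differs from $\tfrac{1}{a+b+c-1}$ by a single term $-\tfrac{\alpha-3/2}{(\alpha-1)^2}$ with $\alpha\in\{a,b,c\}$. This rigidity is precisely what collapses the inverse into the closed form displayed in~(\ref{e21}).

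First I would compute the gradient $\partial_\theta\Phi$. A direct differentiation of~(\ref{e17}) (or equivalently a reading from the dual coordinate expressions listed just above the theorem) yields
\begin{eqnarray*}
\partial_a\Phi &=& \log(a+b+c-1) - \log(a-1) - \tfrac{1}{2(a-1)},\\
\partial_b\Phi &=& \log(a+b+c-1) - \log(b-1) - \tfrac{1}{2(b-1)},\\
\partial_c\Phi &=& \log(a+b+c-1) - \log(c-1) - \tfrac{1}{2(c-1)},
\end{eqnarray*}
which become the bracketed factors in~(\ref{e21}).

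Next I would invert $G$ via the adjugate, so that each entry of $G^{-1}$ is a $2\times 2$ cofactor of $G$ divided by $\det G = -\frac{N}{8(a-1)^2(b-1)^2(c-1)^2(a+b+c-1)}$, with $N = 4cab-8ca+15a-8ba+15c-8bc-27+15b$ being precisely the denominator visible throughout~(\ref{e21}). Writing $s=a+b+c$, the off-diagonal cofactors are the easy half: for instance $M_{12} = \tfrac{1}{s-1}\left(\tfrac{1}{s-1}-\tfrac{c-3/2}{(c-1)^2}\right) - \tfrac{1}{(s-1)^2} = -\tfrac{c-3/2}{(s-1)(c-1)^2}$, and after dividing by $\det G$ and flipping sign from $-G^{-1}$ one recovers the coefficient $\tfrac{4(2c-3)(a-1)^2(b-1)^2}{N}$ of $\partial_b\Phi$ in $\dot a$, matching~(\ref{e21}). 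The three diagonal cofactors work analogously: for example $M_{11} = \left(\tfrac{1}{s-1}-\tfrac{b-3/2}{(b-1)^2}\right)\left(\tfrac{1}{s-1}-\tfrac{c-3/2}{(c-1)^2}\right) - \tfrac{1}{(s-1)^2}$, in which the two $\tfrac{1}{(s-1)^2}$ pieces cancel at once.

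The main obstacle is the algebraic bookkeeping for the three diagonal coefficients. After clearing the common factor $(s-1)(b-1)^2(c-1)^2$, one has to expand
\[
4\bigl[(s-1)(b-\tfrac{3}{2})(c-\tfrac{3}{2}) - (b-\tfrac{3}{2})(c-1)^2 - (c-\tfrac{3}{2})(b-1)^2\bigr]
\]
and verify that every monomial in $b^2$, $c^2$, $b^2c$, $bc^2$ and $bc$ cancels, leaving only $4abc-6ab-6ac+9a-b-c+3$ — precisely the polynomial in the numerator of the coefficient of $\partial_a\Phi$ in $\dot a$. The two cyclic permutations then produce the analogous coefficients appearing in $\dot b$ and $\dot c$. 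All remaining manipulations — multiplying $-G^{-1}$ against the gradient vector and extracting the common factors $(a-1)^2,(b-1)^2,(c-1)^2$ — are mechanical, and the hypothesis $(a,b,c)\in\mathbb{R}^3\setminus\{\Upsilon;\mathbb{V}\cup\{D\}\}$ is exactly what guarantees $\det G\neq 0$, so the inversion is legitimate throughout.
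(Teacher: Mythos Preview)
Your proposal is correct and follows essentially the same route as the paper's own proof: compute $\partial_\theta\Phi$ from~(\ref{e17}), invert the asymptotic metric~(\ref{e18}), and assemble the flow via~(\ref{e2}). The paper simply states the entries of $G^{-1}$ and $\partial_\theta\Phi$ and writes down the resulting system, whereas you supply the cofactor computation and the polynomial cancellation explicitly; this extra detail is helpful but does not constitute a different approach.
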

\begin{proof}
In the case, $G$ is an asymptotic pseudo-Riemannian metric, using
proposition $6$ we have the inverse of the matrix will be given by
\begin{equation}\label{e22}
G^{-1}=\left(
  \begin{array}{ccc}
    a1 & a2 & a3 \\
    b1 & b2 & b3 \\
    c1 & c2 & c3 \\
  \end{array}
\right)\\
\end{equation}
with $\left\{
  \begin{array}{ll}
    a1= -\frac{2(a-1)^{2}(-6ba-6ca+9a+4cab-c-b+3)}{4cab-8ca+15a-8ba+15c-8bc-27+15b} & \hbox{} \\
    a2= -\frac{4(2c-3)(b-1)^{2}(a-1)^{2} }{4cab-8ca+15a-8ba+15c-8bc-27+15b} & \hbox{} \\
    a3=-\frac{4(2b-3)(c-1)^{2}(a-1)^{2}}{4cab-8ca+15a-8ba+15c-8bc-27+15b} & \hbox{} \\
     b1= -\frac{4(2c-3)(b-1)^{2}(a-1)^{2} }{4cab-8ca+15a-8ba+15c-8bc-27+15b}& \hbox{} \\
    b2= -\frac{2(b-1)^{2}(-6ba-a+4cab+9b-c-6bc+3)}{4cab-8ca+15a-8ba+15c-8bc-27+15b} & \hbox{} \\
    b3=-\frac{4(2a-3)(b-1)^{2}(c-1)^{2}}{4cab-8ca+15a-8ba+15c-8bc-27+15b} & \hbox{} \\
    c1= -\frac{4(2b-3)(c-1)^{2}(a-1)^{2}}{4cab-8ca+15a-8ba+15c-8bc-27+15b} & \hbox{} \\
    c2=-\frac{4(2a-3)(b-1)^{2}(c-1)^{2}}{4cab-8ca+15a-8ba+15c-8bc-27+15b} & \hbox{} \\
    c3=-\frac{2(c-1)^{2}(-6ca-a+4cab+9c+3-b-6bc)}{4cab-8ca+15a-8ba+15c-8bc-27+15b} & \hbox{.}
  \end{array}
\right.$\\
So, we have
 \begin{equation}\label{e23}\partial_{\theta} \Phi(\theta)=\left(
                                           \begin{array}{c}
                                                \log (a+b+c-1)- \log (a-1)+\frac{-\frac{1}{2}}{a-1}  \\
                                              \log (a+b+c-1)- \log (b-1)+\frac{-\frac{1}{2}}{b-1} \\
                                             \log (a+b+c-1)- \log (c-1)+\frac{-\frac{1}{2}}{c-1}\\
                                           \end{array}
                                         \right)
  .\end{equation}
The  approximated gradient system via stirling's formula will
therefore be written as follows
\begin{eqnarray*} \left\{
                \begin{array}{ll}
                  \dot{a} = \frac{2(a-1)^{2}(-6ba-6ca+9a+4cab-c-b+3)}{4cab-8ca+15a-8ba+15c-8bc-27+15b}\left(  \log (a+b+c-1)- \log
                  (a-1)-\frac{\frac{1}{2}}{a-1}\right) \hbox{} \\
                  +\frac{4(2c-3)(b-1)^{2}(a-1)^{2}
}{4cab-8ca+15a-8ba+15c-8bc-27+15b}\left(  \log (a+b+c-1)- \log
(b-1)-\frac{\frac{1}{2}}{b-1}\right)& \hbox{} \\
+\frac{4(2b-3)(c-1)^{2}(a-1)^{2}}{4cab-8ca+15a-8ba+15c-8bc-27+15b}
\left(  \log (a+b+c-1)- \log (c-1)-\frac{\frac{1}{2}}{c-1}\right)& \hbox{} \\
                  \dot{b} =\frac{4(2c-3)(b-1)^{2}(a-1)^{2} }{4cab-8ca+15a-8ba+15c-8bc-27+15b}\left(   \log (a+b+c-1)- \log
                  (a-1)-\frac{\frac{1}{2}}{a-1}\right) \hbox{} \\
                  +\frac{2(b-1)^{2}(-6ba-a+4cab+9b-c-6bc+3)}{4cab-8ca+15a-8ba+15c-8bc-27+15b}
\left(   \log (a+b+c-1)- \log (b-1)-\frac{\frac{1}{2}}{b-1}\right)& \hbox{} \\
+\frac{4(2a-3)(b-1)^{2}(c-1)^{2}}{4cab-8ca+15a-8ba+15c-8bc-27+15b}
\left(  \log (a+b+c-1)- \log (c-1)-\frac{\frac{1}{2}}{c-1}\right)& \hbox{} \\
                  \dot{c} =\frac{4(2b-3)(c-1)^{2}(a-1)^{2}}{4cab-8ca+15a-8ba+15c-8bc-27+15b}\left(  \log (a+b+c-1)- \log
                  (a-1)-\frac{\frac{1}{2}}{a-1}\right) \hbox{} \\
                  +\frac{4(2a-3)(b-1)^{2}(c-1)^{2}}{4cab-8ca+15a-8ba+15c-8bc-27+15b}\left(
\log (a+b+c-1)- \log (b-1)-\frac{\frac{1}{2}}{b-1}\right)&\hbox{}\\
+\frac{2(c-1)^{2}(-6ca-a+4cab+9c+3-b-6bc)}{4cab-8ca+15a-8ba+15c-8bc-27+15b}\left(
\log (a+b+c-1)- \log (c-1)-\frac{\frac{1}{2}}{c-1}\right)& \hbox{.}
                \end{array}\right.
\end{eqnarray*}
\end{proof}
\subsection{Linearization and Hamiltonian function.}\label{subsec4}
Let's calculate now \begin{equation*}
\dot{\overrightarrow{\eta}}=G\dot{\overrightarrow{\theta}}
\end{equation*}
so, we have
\begin{equation*} \left(
                             \begin{array}{c}
                               \dot{\eta}_{1} \\
                               \dot{\eta}_{2} \\
                               \dot{\eta}_{3} \\
                             \end{array}
                           \right)
 =\left[
  \begin{array}{ccc}
    \frac{1}{ a+b+c-1} -\frac{a-\frac{3}{2}}{(a-1)^{2}} &  \frac{1}{ a+b+c-1} & \frac{1}{ a+b+c-1}\\
    \frac{1}{ a+b+c-1} & \frac{1}{ a+b+c-1} -\frac{b-\frac{3}{2}}{(b-1)^{2}} & \frac{1}{ a+b+c-1}\\
    \frac{1}{ a+b+c-1}  & \frac{1}{ a+b+c-1}&  \frac{1}{ a+b+c-1} -\frac{c-\frac{3}{2}}{(c-1)^{2}}  \\
  \end{array}
                                                                  \right]\left(
                                                                           \begin{array}{c}
                                                                             \dot{a} \\
\dot{b}\\
\dot{c}\\
\end{array} \right)
\end{equation*}
we have\begin{equation*} \left(
                             \begin{array}{c}
                               \dot{\eta}_{1} \\
                               \dot{\eta}_{2} \\
                               \dot{\eta}_{3} \\
                             \end{array}
                           \right)
 =\left(
     \begin{array}{c}
     \log (a+b+c-1)- \log (a-1)-\frac{\frac{1}{2}}{a-1} \\
       \log (a+b+c-1)- \log (b-1)-\frac{\frac{1}{2}}{b-1}\\
       \log (a+b+c-1)- \log (c-1)-\frac{\frac{1}{2}}{c-1}\\
     \end{array}
   \right)
 \end{equation*}
which clearly shows that system~(\ref{e21}) is linearized in the
form
\begin{equation}\label{e24} \frac{d \eta}{dt}
 = - \eta
 \end{equation}
we have the following proposition.
\begin{proposition}Let $\Upsilon$ the cube of edges $1cm$. Let $(a,b,c)\in
\mathbb{R}^{3}-\{\Upsilon;\mathbb{V}\cup \{D\}\}$. Let $S$ be the
statistical manifold. The Hamiltonian of the approximated  gradient
system~(\ref{e21}) according to Stirling formula is given on the
beta family of the first kind manifold is given  by
\begin{eqnarray}\label{e25}
\mathcal{H}(a,b,c)&=& \frac{ \log (a+b+c-1)- \log
(b-1)-\frac{\frac{1}{2}}{b-1}}{ \log (a+b+c-1)- \log
(a-1)-\frac{\frac{1}{2}}{a-1}}\nonumber\\&&+\frac{ \log (a+b+c-1)-
\log (c-1)-\frac{\frac{1}{2}}{c-1} }{\log (a+b+c-1)- \log
(b-1)-\frac{\frac{1}{2}}{b-1} }
\end{eqnarray}
\end{proposition}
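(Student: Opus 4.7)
The plan is to mirror the proof of Theorem 2 essentially verbatim, with the exact digamma expressions $\phi_x(x) - \phi_x(a+b+c)$ replaced throughout by their asymptotic Stirling counterparts $\log(a+b+c-1) - \log(x-1) - \tfrac{1/2}{x-1}$ that appear in equation~(23).

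First I would invoke equation~(24), which establishes that in the dual affine coordinates $\eta = (\eta_1, \eta_2, \eta_3)$ defined in equation~(23), the asymptotic gradient flow~(21) linearizes to the decoupled system $\dot{\eta}_i = -\eta_i$ for $i=1,2,3$. This places the problem in precisely the hypothesis of the main theorem of \cite{mama-proceeding}, which guarantees that such an odd-dimensional linearized gradient system on a statistical manifold admits a canonical Hamiltonian formulation in a sub-dynamical system of dimension~$4$.

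Next, following the canonical substitution used in the proof of Theorem~2, I would set
\begin{equation*}
P_1 = -\frac{1}{\eta_1}, \quad Q_1 = \eta_2, \quad P'_1 = -\frac{1}{\eta_2}, \quad Q'_1 = \eta_3,
\end{equation*}
and verify directly from $\dot{\eta}_i = -\eta_i$ that $\dot{P}_1 = P_1$, $\dot{Q}_1 = -Q_1$, $\dot{P'}_1 = P'_1$, $\dot{Q'}_1 = -Q'_1$. One then checks that the function $\mathcal{H} = Q_1 P_1 + Q'_1 P'_1$ (up to the sign convention used in Theorem~2) satisfies Hamilton's equations
\begin{equation*}
\dot{P}_1 = \frac{\partial \mathcal{H}}{\partial Q_1}, \quad \dot{Q}_1 = -\frac{\partial \mathcal{H}}{\partial P_1}, \quad \dot{P'}_1 = \frac{\partial \mathcal{H}}{\partial Q'_1}, \quad \dot{Q'}_1 = -\frac{\partial \mathcal{H}}{\partial P'_1},
\end{equation*}
with the Poisson tensor $\barwedge$ of equation~(15). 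Re-expressing $\mathcal{H}$ in the original coordinates by substituting the expressions of $\eta_1, \eta_2, \eta_3$ from equation~(23) yields exactly formula~(25).

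The main obstacle is the bookkeeping required to confirm that the canonical Hamilton equations in $(P_1, Q_1, P'_1, Q'_1)$ actually pull back to the explicit asymptotic gradient flow~(21) in the original coordinates $(a,b,c)$. This reduces to verifying that the Jacobian of the map $(a,b,c) \mapsto (\eta_1, \eta_2, \eta_3)$ coincides with the asymptotic pseudo-Riemannian metric $G$ of equation~(18); this is automatic from the identity $\eta_i = \partial_i \Phi(\theta)$ together with $g_{ij} = \partial_i \partial_j \Phi(\theta)$, valid because the potential function $\Phi$ from equation~(17) retains its exponential-family role in the asymptotic regime $(a,b,c) \in \mathbb{R}^{3}-\{\Upsilon; \mathbb{V} \cup \{D\}\}$ where $G$ is invertible.
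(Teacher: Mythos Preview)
Your proposal is correct and follows essentially the same route as the paper: invoke the main theorem of \cite{mama-proceeding} together with the dual coordinates~(23), introduce the canonical variables $P_1,Q_1,P'_1,Q'_1$ built from $\eta_1,\eta_2,\eta_3$, and read off $\mathcal{H}=Q_1P_1+Q'_1P'_1$ to obtain~(25). The paper's own proof is sparser than yours (it omits the explicit check of $\dot P_1=P_1$, etc., and the Jacobian remark, and uses $P_1=+1/\eta_1$ rather than $-1/\eta_1$, which is the sign convention you flagged), but the substance is identical.
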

\begin{proof} Using the main Theorem in \cite{mama-proceeding}, and using
(\ref{e23}), by setting
\begin{eqnarray*}\left\{
  \begin{array}{ll}
    P_{1}=\frac{1}{\log (a+b+c-1)- \log
(a-1)+\frac{-\frac{1}{2}}{a-1}}, & \hbox{} \\
    Q_{1}=\log (a+b+c-1)- \log
(b-1)+\frac{-\frac{1}{2}}{b-1}, & \hbox{} \\
    P'_{1}=\frac{1}{\log (a+b+c-1)-
\log (b-1)+\frac{-\frac{1}{2}}{b-1}}, & \hbox{} \\
    Q'_{1}=\log (a+b+c-1)-
\log (c-1)+\frac{-\frac{1}{2}}{c-1}. & \hbox{}
  \end{array}
\right. \end{eqnarray*}
 We obtain
\begin{eqnarray*} \mathcal{H}(a,b,c)&=& \frac{ \log
(a+b+c-1)- \log (b-1)-\frac{\frac{1}{2}}{b-1}}{ \log (a+b+c-1)- \log
(a-1)-\frac{\frac{1}{2}}{a-1}}\nonumber\\&&+\frac{ \log (a+b+c-1)-
\log (c-1)-\frac{\frac{1}{2}}{c-1} }{\log (a+b+c-1)- \log
(b-1)-\frac{\frac{1}{2}}{b-1} }
\end{eqnarray*}
 and $\{\mathcal{H},\mathcal{H}\}=0$.
$\mathcal{H}$ is involution.\newline So,  using the system
(\ref{e21})  and Hamiltonian function (\ref{e25}), we have:
\begin{equation*} \frac{d \mathcal{H}}{dt}=\frac{\partial
H}{\partial a}\frac{d a}{dt}+\frac{\partial \mathcal{H}}{\partial
b}\frac{d b}{dt}+\frac{\partial \mathcal{H}}{\partial c}\frac{d
c}{dt}=0\end{equation*} .
\end{proof}
\section{ Integrability of approximated Gradient Systems.}\label{sec6}
In this section we prove the complete integrability of approximated
 gradient system (\ref{e21}). We have the following Theorem:
\begin{theorem}Let $\Upsilon$ the cube of edges $1cm$. Let $(a,b,c)\in
\mathbb{R}^{3}-\{\Upsilon;\mathbb{V}\cup \{D\}\}$. On a statistical
manifold $S$ of dimension $3$ defined by the approximated Bivariate
Beta family of the first kind with three parameters, the
approximated  gradient system obtained is a sub-dynamical system of
$4$-dimensional system and is a completely integrable Hamiltonian
system, such that\begin{equation}\label{e27}\left(
                    \begin{array}{c}
                      \dot{P}_{1} \\
                      \dot{Q}_{1}\\
                      \dot{P'}_{1}\\
                      \dot{Q'}_{1}\\
                    \end{array}
                  \right)= \left(
                              \begin{array}{cccc}
                                0 & 1 & 0 & 0 \\
                                -1 & 0 & 0 & 0 \\
                                0 & 0 & 0 & 1 \\
                                0 & 0 & -1 & 0 \\
                              \end{array}
                            \right)
                  \left(
                                    \begin{array}{c}
                                     \frac{\partial \mathcal{H}}{\partial P_{1}} \\
                                      \frac{\partial \mathcal{H}}{\partial Q_{1}}\\
\frac{\partial \mathcal{H}}{\partial P'_{1}}\\
 \frac{\partial
\mathcal{H}}{\partial Q'_{1}}
                                    \end{array}
                                  \right)\end{equation}
where $\barwedge=\left(
                              \begin{array}{cccc}
                                0 & 1 & 0 & 0 \\
                                -1 & 0 & 0 & 0 \\
                                0 & 0 & 0 & 1 \\
                                0 & 0 & -1 & 0 \\
                              \end{array}
                            \right)$ is a Poisson tensor.
\end{theorem}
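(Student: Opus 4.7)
The plan is to repeat, mutatis mutandis, the argument from the theorem in Section~$3$, replacing the exact digamma-based dual coordinates by those coming from Stirling's approximation. The crucial observation, already established in~(\ref{e24}), is that the approximated gradient system~(\ref{e21}) is linearized in the dual affine coordinates $\eta_i$, yielding $\dot{\eta}_i = -\eta_i$ for $i=1,2,3$. All subsequent steps are formal consequences of this linearization once the canonical embedding of $(a,b,c)\in S$ into a $4$-dimensional phase space has been specified.

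First I would introduce the canonical variables as in Proposition~$7$, namely $P_1 = 1/\eta_1$, $Q_1 = \eta_2$, $P'_1 = 1/\eta_2$ and $Q'_1 = \eta_3$. Differentiating these definitions and substituting $\dot{\eta}_i = -\eta_i$ gives directly
\begin{equation*}
\dot{P}_1 = P_1,\quad \dot{Q}_1 = -Q_1,\quad \dot{P'}_1 = P'_1,\quad \dot{Q'}_1 = -Q'_1.
\end{equation*}
From the form $\mathcal{H} = Q_1 P_1 + Q'_1 P'_1$ given in~(\ref{e25}) one reads off $\partial \mathcal{H}/\partial P_1 = Q_1$, $\partial \mathcal{H}/\partial Q_1 = P_1$, $\partial \mathcal{H}/\partial P'_1 = Q'_1$ and $\partial \mathcal{H}/\partial Q'_1 = P'_1$, so the four equations above are exactly $\dot{P}_1 = \partial \mathcal{H}/\partial Q_1$, $\dot{Q}_1 = -\partial \mathcal{H}/\partial P_1$ and the analogous pair for the primed variables. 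This is precisely the matrix identity~(\ref{e27}).

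To finish complete integrability on the $4$-dimensional phase space, I would split $\mathcal{H} = \mathcal{H}_1 + \mathcal{H}_2$ with $\mathcal{H}_1 = Q_1 P_1$ and $\mathcal{H}_2 = Q'_1 P'_1$. Since the canonical pairs $(P_1, Q_1)$ and $(P'_1, Q'_1)$ depend on disjoint sets of variables, $\{\mathcal{H}_1, \mathcal{H}_2\} = 0$ with respect to the Poisson structure $\barwedge$, and $d\mathcal{H}_1 \wedge d\mathcal{H}_2 \neq 0$ follows from the independence of the canonical coordinates. The tensor $\barwedge$ is constant, skew-symmetric and of full rank, so $[\barwedge,\barwedge] = 0$ holds trivially and $\barwedge$ is a genuine Poisson tensor; combined with the Hamiltonian form, this yields Liouville--Arnol'd integrability.

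The main obstacle I anticipate is essentially bookkeeping: one must verify that the $(a,b,c)$ chain-rule derivatives of the nonlinear expressions for $P_1, Q_1, P'_1, Q'_1$ (involving $\log(a+b+c-1)$, $\log(a-1)$ and the rational terms $1/(a-1)$, etc.) combine to reproduce the rational coefficients appearing in~(\ref{e21}), i.e.\ that the canonical Hamiltonian flow in $(P_1,Q_1,P'_1,Q'_1)$ is genuinely consistent with the explicit form $G^{-1}\partial_{\theta}\Phi$ of the gradient system computed in Theorem~$4$. This step is algebraically involved but mechanical, being a direct translation through~(\ref{e22}) and~(\ref{e23}); once done, the Hamiltonian and integrability conclusions follow immediately from the main theorem of~\cite{mama-proceeding}.
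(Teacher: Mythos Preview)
Your proposal is correct and follows essentially the same route as the paper: define $P_1,Q_1,P'_1,Q'_1$ in terms of the dual coordinates, verify $\dot P_1=P_1$, $\dot Q_1=-Q_1$, $\dot P'_1=P'_1$, $\dot Q'_1=-Q'_1$, read off the partials of $\mathcal{H}=P_1Q_1+P'_1Q'_1$, and assemble the matrix identity~(\ref{e27}). The only difference is that the paper carries out the verification of $\dot P_1=P_1$ etc.\ by an explicit chain-rule computation of $\partial P_1/\partial a,\partial P_1/\partial b,\partial P_1/\partial c$ against the coefficients of~(\ref{e21}), whereas you shortcut this by invoking the linearization~(\ref{e24}) and differentiating $P_1=1/\eta_1$ directly; your last paragraph correctly identifies the paper's computation as the ``bookkeeping'' alternative, and your integrability argument via $\mathcal{H}_1=P_1Q_1$, $\mathcal{H}_2=P'_1Q'_1$ is slightly more explicit than the paper's bare citation of Liouville--Arnol'd.
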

\begin{proof}We have the following relation.
\begin{eqnarray}\label{p1}\left\{
                            \begin{array}{ll}
                              \frac{\partial P_{1}}{\partial a}=-\frac{\frac{1}{a+b+c-1}
-\frac{2}{a-1}-\frac{\frac{1}{2}-a}{\left(a-1\right)^{2}}}{\left(\log
(a+b+c-1)+1-\log (a-1)+\frac{\frac{1}{2}-a}{a-1}\right)^{2}}, & \hbox{} \\
                              \frac{\partial P_{1}}{\partial b}=\frac{\partial P_{1}}{\partial
c}=-\frac{1}{\left(\log (a+b+c-1)+1-\log
(a-1)+\frac{\frac{1}{2}-a}{a-1}\right)^{2}(a+b+c-1)}, & \hbox{} \\
                              \frac{\partial Q_{1}}{\partial
b}=\frac{1}{a+b+c-1}
-\frac{2}{b-1}-\frac{\frac{1}{2}-b}{\left(b-1\right)^{2}}, & \hbox{} \\
                              \frac{\partial Q_{1}}{\partial a}=\frac{\partial Q_{1}}{\partial
c}=\frac{1}{a+b+c-1}, & \hbox{} \\
                              \frac{\partial P'_{1}}{\partial b}=-\frac{\frac{1}{a+b+c-1}
-\frac{2}{b-1}-\frac{\frac{1}{2}-b}{\left(b-1\right)^{2}}}{\left(\log
(a+b+c-1)+1-\log (b-1)+\frac{\frac{1}{2}-b}{b-1}\right)^{2}}, & \hbox{} \\
                              \frac{\partial P'_{1}}{\partial a}=\frac{\partial P'_{1}}{\partial
c}=-\frac{1}{\left(\log (a+b+c-1)+1-\log
(b-1)+\frac{\frac{1}{2}-b}{b-1}\right)^{2}(a+b+c-1)} & \hbox{.}\\
\frac{\partial Q'_{1}}{\partial
b}=\frac{\partial Q'_{1}}{\partial b}=\frac{1}{a+b+c-1},& \hbox{} \\
\frac{\partial Q'_{1}}{\partial c}= \frac{1}{a+b+c-1}
-\frac{2}{c-1}-\frac{\frac{1}{2}-c}{\left(c-1\right)^{2}}.& \hbox{}
                            \end{array}
                          \right.
\end{eqnarray}
So,  using the system (\ref{e21})  and the relation (\ref{p1}), we
have
\begin{equation*}
\left\{
  \begin{array}{ll}
    \dot{P}_{1}=\frac{\partial P_{1}}{\partial a}\frac{d
a}{dt}+\frac{\partial P_{1}}{\partial b}\frac{d
b}{dt}+\frac{\partial P_{1}}{\partial c}\frac{d c}{dt}=\frac{1}{\log
(a+b+c-1)- \log (a-1)+\frac{-\frac{1}{2}}{a-1}}=P_{1}, & \hbox{} \\
    \dot{Q}_{1}=\frac{\partial Q_{1}}{\partial a}\frac{d
a}{dt}+\frac{\partial Q_{1}}{\partial b}\frac{d
b}{dt}+\frac{\partial Q_{1}}{\partial c}\frac{d c}{dt}=-\left(\log
(a+b+c-1)- \log (b-1)+\frac{-\frac{1}{2}}{b-1}\right)=-Q_{1}, & \hbox{} \\
    \dot{P'}_{1}=\frac{\partial P'_{1}}{\partial a}\frac{d
a}{dt}+\frac{\partial P'_{1}}{\partial b}\frac{d
b}{dt}+\frac{\partial P'_{1}}{\partial c}\frac{d
c}{dt}=\frac{1}{\log (a+b+c-1)- \log
(b-1)+\frac{-\frac{1}{2}}{b-1}}=P'_{1}, & \hbox{} \\
    \dot{Q'}_{1}=\frac{\partial Q'_{1}}{\partial a}\frac{d
a}{dt}+\frac{\partial Q'_{1}}{\partial b}\frac{d
b}{dt}+\frac{\partial Q'_{1}}{\partial c}\frac{d c}{dt}=-\left(\log
(a+b+c-1)- \log (c-1)+\frac{-\frac{1}{2}}{c-1}\right)=-Q'_{1}. &
\hbox{}
  \end{array}
\right.
\end{equation*}
we know that the Hamiltonian function is given by
\begin{equation*}\mathcal{H}(a,b,c)=P_{1}Q_{1}+P'_{1}Q'_{1}\end{equation*}
we have the relation  $ \frac{\partial \mathcal{H}}{\partial
Q_{1}}=P_{1},\;
 \frac{\partial \mathcal{H}}{\partial P_{1}}=Q_{1},\;
 \frac{\partial \mathcal{H}}{\partial Q'_{1}}=P'_{1},\;
 \frac{\partial \mathcal{H}}{\partial P'_{1}}=Q'_{1}
$\newline   We have the following system:\begin{eqnarray}\label{e26}
\left\{
  \begin{array}{ll}
    \frac{\partial \mathcal{H}}{\partial Q_{1}}=\dot{P}_{1} & \hbox{} \\
    -\frac{\partial \mathcal{H}}{\partial P_{1}}=\dot{Q}_{1} & \hbox{} \\
    \frac{\partial \mathcal{H}}{\partial Q'_{1}}=\dot{P'}_{1} & \hbox{} \\
    -\frac{\partial \mathcal{H}}{\partial P'_{1}}=\dot{Q'}_{1} & \hbox{.}
  \end{array}
\right.
\end{eqnarray}
the system~(\ref{e26}) takes the form\begin{equation*}\left(
                    \begin{array}{c}
                      \dot{P}_{1} \\
                      \dot{Q}_{1}\\
                      \dot{P'}_{1}\\
                      \dot{Q'}_{1}\\
                    \end{array}
                  \right)= \left(
                              \begin{array}{cccc}
                                0 & 1 & 0 & 0 \\
                                -1 & 0 & 0 & 0 \\
                                0 & 0 & 0 & 1 \\
                                0 & 0 & -1 & 0 \\
                              \end{array}
                            \right)
                  \left(
                                    \begin{array}{c}
                                     \frac{\partial \mathcal{H}}{\partial P_{1}} \\
                                      \frac{\partial \mathcal{H}}{\partial Q_{1}}\\
\frac{\partial \mathcal{H}}{\partial P'_{1}}\\
 \frac{\partial
\mathcal{H}}{\partial Q'_{2}}.
                                    \end{array}
                                  \right)\end{equation*}
We show that~(\ref{e27}) is equivalent to the Hamiltonian system
\begin{eqnarray}\label{e28}
\left\{
  \begin{array}{ll}
    \frac{\partial \mathcal{H}}{\partial Q_{1}}=\dot{P}_{1} & \hbox{} \\
    \frac{\partial \mathcal{H}}{\partial P_{1}}=-\dot{Q}_{1} & \hbox{} \\
    \frac{\partial \mathcal{H}}{\partial Q'_{1}}=\dot{P'}_{1} & \hbox{} \\
    \frac{\partial \mathcal{H}}{\partial P'_{1}}=-\dot{Q'}_{1} & \hbox{.}
  \end{array}
\right.
\end{eqnarray}
on $S$. Thus $(Q_{1}, P_{1},Q'_{1},P'_{1})$ is a set of canonical
variables. Since $\mathcal{H}\in C^{1}(S)$ does not depend on $t$
explicitly. According to Liouville-Arnol'd~\cite{
Li-book2}system~(\ref{e21}) the complete integrability is proven.
\end{proof}
\section{ Lax pair representation.}\label{sec7}
In \cite{mama0-journal}, it is given the definition of the Lax-pair.
In this section, we will construct the pair of lax associated with
the gradient system. We recall that the pair of Lax must respect two
important conditions: $tr\left(L\right)=\mathcal{H}$ and $\frac{d
tr\left(L\right)}{dt}=0$. With $\frac{d tr\left(L\right)}{dt}=tr
\left[L, N\right]$.\newline So, we have the  Theorem
\begin{theorem}Let $(a,b,c)\in
\mathbb{R}^{3}-\{\Upsilon;\mathbb{V}\cup \{D\}\}$. The approximated
 gradient system on the manifold $S$ of Beta family of the first kind
Distribution is represented by the following Lax
pair:\begin{equation} \dot{L}=  \left[L, N\right]
\end{equation}
\begin{equation*}L=  \left(
                     \begin{array}{ccc}
                       \frac{ \log (a+b+c-1)- \log (b-1)-\frac{\frac{1}{2}}{b-1}}{ \log
(a+b+c-1)- \log (a-1)-\frac{\frac{1}{2}}{a-1}} &0  &\sqrt{ \frac{
\log (a+b+c-1)- \log (c-1)-\frac{\frac{1}{2}}{c-1}}{ \log
(a+b+c-1)- \log (a-1)-\frac{\frac{1}{2}}{a-1}}}  \\
 0  & 0 &0  \\
 \sqrt{ \frac{
\log (a+b+c-1)- \log (c-1)-\frac{\frac{1}{2}}{c-1}}{ \log (a+b+c-1)-
\log (a-1)-\frac{\frac{1}{2}}{a-1}}}  & 0  & \frac{ \log (a+b+c-1)-
\log (c-1)-\frac{\frac{1}{2}}{c-1} }{\log
(a+b+c-1)- \log (b-1)-\frac{\frac{1}{2}}{b-1} } \\
                     \end{array}
                   \right)
\end{equation*} and
 \begin{equation*}N= \left(
                     \begin{array}{ccc}
                       \ell & 0 & 0 \\
                       0 & 0 & 0 \\
                       0 & 0 & \ell\\
                     \end{array}
                   \right)
\end{equation*}
with, $\ell\in \mathbb{R}$.
\end{theorem}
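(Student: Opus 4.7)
The plan is to reduce the verification of the Lax representation to two elementary checks: (i) $tr(L)=\mathcal{H}$, and (ii) both sides of $\dot{L}=[L,N]$ vanish identically along the flow. The key observation is that $L$ is built entirely out of the bilinear invariants $P_{1}Q_{1}$, $P'_{1}Q'_{1}$ and $P_{1}Q'_{1}$, so the Lax equation here will encode conservation laws rather than genuine matrix dynamics.

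Using the canonical coordinates $(P_{1},Q_{1},P'_{1},Q'_{1})$ introduced just before~(\ref{e25}), I first recognize the non-zero entries of $L$ as
\[
L_{11}=P_{1}Q_{1}, \qquad L_{33}=P'_{1}Q'_{1}, \qquad L_{13}=L_{31}=\sqrt{P_{1}Q'_{1}}.
\]
Hence $tr(L)=P_{1}Q_{1}+P'_{1}Q'_{1}$, which reproduces the Hamiltonian~(\ref{e25}) exactly and disposes of the first requirement.

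Next, I invoke the canonical evolution already derived in the proof of the preceding theorem, namely $\dot{P}_{1}=P_{1}$, $\dot{Q}_{1}=-Q_{1}$, $\dot{P}'_{1}=P'_{1}$, $\dot{Q}'_{1}=-Q'_{1}$. Differentiating the three products appearing in $L$ gives
\[
\tfrac{d}{dt}(P_{1}Q_{1})=0, \qquad \tfrac{d}{dt}(P'_{1}Q'_{1})=0, \qquad \tfrac{d}{dt}(P_{1}Q'_{1})=0,
\]
each by the cancellation $(\dot{P}/P)+(\dot{Q}/Q)=1-1=0$. Consequently every entry of $L$ is a first integral of the flow and $\dot{L}=0$.

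Finally, I will show $[L,N]=0$ by a direct block computation. Since $N=\mathrm{diag}(\ell,0,\ell)$ acts as the scalar $\ell$ on the span of the first and third basis vectors---which is exactly the support of $L$---the restriction of $N$ to that $2\times 2$ block is a multiple of the identity and therefore commutes with $L$ there; on the middle row and column both matrices vanish. An entry-by-entry check of $LN$ and $NL$ confirms $LN=NL$, whence $[L,N]=0$. Combining the last two steps yields $\dot{L}=0=[L,N]$, and the trace identity $\tfrac{d\,tr(L)}{dt}=tr[L,N]=0$ is automatic, simply restating the conservation of $\mathcal{H}$. The only substantive point is the observation that $\sqrt{P_{1}Q'_{1}}$, despite its radical form, must also be a constant of motion; this is the same sign-cancellation used for $P_{1}Q_{1}$, and is the step one should actually write out carefully. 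No other genuine obstacle arises.
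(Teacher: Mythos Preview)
Your proposal is correct and follows essentially the same route as the paper: both arguments reduce to checking that $\dot{L}=0$ and $[L,N]=0$ separately, so that the Lax equation holds trivially. Your presentation via the canonical variables $P_{1},Q_{1},P'_{1},Q'_{1}$ makes the conservation of each entry of $L$ transparent, whereas the paper reaches the same conclusion through an ansatz for $L$ and $N$ and then solving for the off-diagonal entry and the diagonal of $N$; the underlying computation is the same.
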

\begin{proof}
we are given a symmetric matrix $L$ such that its trace is equal to
the Hamiltonian and whose unknowns are on the second diagonal as a
sequence.
\begin{equation}\label{e29}L= \left(
                    \begin{array}{ccc}
                       \frac{ \log (a+b+c-1)- \log (b-1)-\frac{\frac{1}{2}}{b-1}}{ \log
(a+b+c-1)- \log (a-1)-\frac{\frac{1}{2}}{a-1}} & 0 & m_{1}\\
                      0 & 0 & 0\\
                      m_{1} &0 & \frac{ \log
(a+b+c-1)- \log (c-1)-\frac{\frac{1}{2}}{c-1} }{\log
(a+b+c-1)- \log (b-1)-\frac{\frac{1}{2}}{b-1} }\\
                    \end{array}
                  \right)
\end{equation}
The matrix~(\ref{e29}) is a symmetric matrix and real-valued, so it
is diagonalizable. And according to Zakharov shabat the determinant
of this matrix is a prime integral therefore its eigenvalues are
prime integrals.   Thus to determine the values of $m$, we pose the
condition that these values are prime integrals. We have the
following matrix
\begin{equation*}L=  \left(
                     \begin{array}{ccc}
                       \frac{ \log (a+b+c-1)- \log (b-1)-\frac{\frac{1}{2}}{b-1}}{ \log
(a+b+c-1)- \log (a-1)-\frac{\frac{1}{2}}{a-1}} &0  &\sqrt{ \frac{
\log (a+b+c-1)- \log (c-1)-\frac{\frac{1}{2}}{c-1}}{ \log
(a+b+c-1)- \log (a-1)-\frac{\frac{1}{2}}{a-1}}}  \\
 0  & 0 &0 \\
 \sqrt{ \frac{
\log (a+b+c-1)- \log (c-1)-\frac{\frac{1}{2}}{c-1}}{ \log (a+b+c-1)-
\log (a-1)-\frac{\frac{1}{2}}{a-1}}}  & 0  & \frac{ \log (a+b+c-1)-
\log (c-1)-\frac{\frac{1}{2}}{c-1} }{\log
(a+b+c-1)- \log (b-1)-\frac{\frac{1}{2}}{b-1} } \\
                     \end{array}
                   \right)
\end{equation*}
The matrix that allows us to have $\frac{d tr\left(L\right)}{dt}=0$
is given by
\begin{equation*}N= \left(
                    \begin{array}{ccc}
                      \ell_{1} & 0 & 0 \\
                      0 & 0 & 0 \\
                      0 & 0 & \ell_{2} \\
                    \end{array}
                  \right)
\end{equation*}
To determine the values of $\ell_{1}$ and $\ell_{2}$, we solve the
equation: $\dot{L}= \left[L, N\right]$. So we obtain
\begin{equation*}N= \left(
                    \begin{array}{ccc}
                      \ell & 0 & 0 \\
                      0 & 0 & 0 \\
                      0 & 0 & \ell \\
                    \end{array}
                  \right)
\end{equation*} for all $\ell \in\mathbb{R}$.
So, we have  \begin{equation*}\frac{dL}{dt}= \left(
                    \begin{array}{ccc}
                     0 & 0 & 0 \\
                      0 & 0 & 0 \\
                      0 & 0 & 0\\
                    \end{array}
                  \right) \end{equation*}
and we have
\begin{equation*}\left[L, N\right]=\left(
                    \begin{array}{ccc}
                     0 & 0 & 0 \\
                      0 & 0 & 0 \\
                      0 & 0 & 0\\
                    \end{array}
                  \right)
                  \end{equation*}
\end{proof}
\section{General conclusion}\label{sec8}
In this paper, we have studied the gradient system associated with
the tree-parameter bivariate beta statistical manifold of the first
kind. We showed that this system admits a potential function and a
Fisher information metric, and that the corresponding dynamics can
be expressed as a Hamiltonian system. Using Stirling's approximation
for the gamma function, we derived an explicit and simplified form
of the potential function, allowing us to construct an approximated
Fisher information matrix and inverse. This led to a linearized
gradient system,for which we obtained a Poisson structure and a
Hamiltonian in involution. The complete integrability demonstrated
in this paper holds for the asymptotic approximation of the
bivariate beta statistical manifold, valid in the regime where
$a,b,c>1$. The use of Stirling's formula simplifies the geometry but
implies that the results concern an approximate model. Therefore,
the Hamiltonian structure and complete integrability apply to the
approximated system derived  from this asymptotic setting, and not
to the exact bivariate beta model in its full generality.



\subsection*{Acknowledgments.} We would like to thank all the active
members of  the University of Yaounde1 in Cameroon. We would like to
thank all the active members of the algebra and geometry research
group at the University of Maroua in Cameroon. We would also like to
thank the laboratory of the Higher National School of Polytechnic of
Yaounde 1 in Cameroon.




\EndPaper


\end{document}